\begin{document}
\newtheorem{theorem}{Theorem}    
\newtheorem{proposition}[theorem]{Proposition}
\newtheorem{conjecture}[theorem]{Conjecture}
\def\theconjecture{\unskip}
\newtheorem{corollary}[theorem]{Corollary}
\newtheorem{lemma}[theorem]{Lemma}
\newtheorem{sublemma}[theorem]{Sublemma}
\newtheorem{observation}[theorem]{Observation}
\theoremstyle{definition}
\newtheorem{definition}{Definition}
\newtheorem{notation}[definition]{Notation}
\newtheorem{remark}[definition]{Remark}
\newtheorem{question}[definition]{Question}
\newtheorem{questions}[definition]{Questions}
\newtheorem{example}[definition]{Example}
\newtheorem{problem}[definition]{Problem}
\newtheorem{exercise}[definition]{Exercise}

\numberwithin{theorem}{section} \numberwithin{definition}{section}
\numberwithin{equation}{section}

\def\earrow{{\mathbf e}}
\def\rarrow{{\mathbf r}}
\def\uarrow{{\mathbf u}}
\def\varrow{{\mathbf V}}
\def\tpar{T_{\rm par}}
\def\apar{A_{\rm par}}

\def\reals{{\mathbb R}}
\def\torus{{\mathbb T}}
\def\heis{{\mathbb H}}
\def\integers{{\mathbb Z}}
\def\naturals{{\mathbb N}}
\def\complex{{\mathbb C}\/}
\def\distance{\operatorname{distance}\,}
\def\support{\operatorname{support}\,}
\def\dist{\operatorname{dist}\,}
\def\Span{\operatorname{span}\,}
\def\degree{\operatorname{degree}\,}
\def\kernel{\operatorname{kernel}\,}
\def\dim{\operatorname{dim}\,}
\def\codim{\operatorname{codim}}
\def\trace{\operatorname{trace\,}}
\def\Span{\operatorname{span}\,}
\def\dimension{\operatorname{dimension}\,}
\def\codimension{\operatorname{codimension}\,}
\def\nullspace{\scriptk}
\def\kernel{\operatorname{Ker}}
\def\ZZ{ {\mathbb Z} }
\def\p{\partial}
\def\rp{{ ^{-1} }}
\def\Re{\operatorname{Re\,} }
\def\Im{\operatorname{Im\,} }
\def\ov{\overline}
\def\eps{\varepsilon}
\def\lt{L^2}
\def\diver{\operatorname{div}}
\def\curl{\operatorname{curl}}
\def\etta{\eta}
\newcommand{\norm}[1]{ \|  #1 \|}
\def\expect{\mathbb E}
\def\bull{$\bullet$\ }
\def\C{\mathbb{C}}
\def\R{\mathbb{R}}
\def\Rn{{\mathbb{R}^n}}
\def\Sn{{{S}^{n-1}}}
\def\M{\mathbb{M}}
\def\N{\mathbb{N}}
\def\Q{{\mathbb{Q}}}
\def\Z{\mathbb{Z}}
\def\F{\mathcal{F}}
\def\L{\mathcal{L}}
\def\S{\mathcal{S}}
\def\essinf{\operatorname{essinf}}
\def\esssup{\operatorname{esssup}}
\def\esslimsup{\operatorname{esslimsup}}
\def\essliminf{\operatorname{essliminf}}
\def\supp{\operatorname{supp}}
\def\dist{\operatorname{dist}}
\def\pv{\operatorname{p.v.}}
\def\essi{\operatornamewithlimits{ess\,inf}}
\def\esss{\operatornamewithlimits{ess\,sup}}
\def\xone{x_1}
\def\xtwo{x_2}
\def\xq{x_2+x_1^2}
\newcommand{\abr}[1]{ \langle  #1 \rangle}

\newcommand{\Norm}[1]{ \left\|  #1 \right\| }
\newcommand{\set}[1]{ \left\{ #1 \right\} }
\def\one{\mathbf 1}
\def\whole{\mathbf V}
\newcommand{\modulo}[2]{[#1]_{#2}}

\def\scriptf{{\mathcal F}}
\def\scriptg{{\mathcal G}}
\def\scriptm{{\mathcal M}}
\def\scriptb{{\mathcal B}}
\def\scriptc{{\mathcal C}}
\def\scriptt{{\mathcal T}}
\def\scripti{{\mathcal I}}
\def\scripte{{\mathcal E}}
\def\scriptv{{\mathcal V}}
\def\scriptw{{\mathcal W}}
\def\scriptu{{\mathcal U}}
\def\scriptS{{\mathcal S}}
\def\scripta{{\mathcal A}}
\def\scriptr{{\mathcal R}}
\def\scripto{{\mathcal O}}
\def\scripth{{\mathcal H}}
\def\scriptd{{\mathcal D}}
\def\scriptl{{\mathcal L}}
\def\scriptn{{\mathcal N}}
\def\scriptp{{\mathcal P}}
\def\scriptk{{\mathcal K}}
\def\frakv{{\mathfrak V}}

\title[A Tracing of the Fractional Temperature Field]
{A Tracing of the Fractional Temperature Field}

\author[S.G. Shi]{S.G. Shi } 

\author[J. Xiao]{J. Xiao}

\subjclass[2010]{
Primary 31C15, 42B20; Secondary 32A35; 35K08.
}
%
\keywords{Fractional temperature field; capacity; maximal function}
\thanks{S.G. Shi was supported by NSF of China (Grant Nos. 11301249, 11271175) and the Applied Mathematics Enhancement Program of Linyi University(No. LYDX2013BS059); J. Xiao was supported by NSERC of Canada (FOAPAL: 202979463102000) and URP of Memorial University
(FOAPAL: 208227463102000).}

\address{S.G. Shi\\Department of Mathematics\\
Linyi University \\
Linyi 276005\\
P. R. China}
\email{shishaoguang@mail.bnu.edu.cn}

\address{J. Xiao\\Department of Mathematics and Statistics\\
Memorial University \\
St. John¡¯s, NL A1C 5S7\\
Canada}
\email{jxiao@mun.ca}


\maketitle

\begin{abstract}
This note is devoted to a study of $L^q$-tracing of the fractional temperature field $u(t,x)$ -- the weak solution of the fractional heat equation $(\partial_t+(-\Delta_x)^\alpha)u(t,x)=g(t,x)$ in $L^p(\mathbb R^{1+n}_+)$ subject to the initial temperature $u(0,x)=f(x)$ in $L^p(\mathbb R^n)$.
\end{abstract}

\section{Introduction}\label{section1} 

Directly continuing from \cite{CX, JXYZ}, we consider the fractional heat equation in the upper-half Euclidean space $\mathbb R_+^{1+n}=\mathbb R_+\times\mathbb R^n$ with $\mathbb R_+=(0,\infty)$ and $n\ge 1$:
\begin{equation}\label{1.1}
\begin{cases}
\big(\partial_t+(-\Delta_x)^{\alpha}\big)u(t,x)=g(t,x)\,\,\,\,\forall\,\,(t,x)\in\mathbb{R}_{+}^{1+n};\\
u(0,x)=f(x)\,\,\,\,\forall\,\,x\in\mathbb{R}^{n},
\end{cases}
\end{equation}
where $(-\Delta_x)^{\alpha}$ denotes the fractional ($0<\alpha<1$) power of the spatial Laplacian that is determined by
$$
(-\Delta_x)^{\alpha}u(\cdot,x)=\mathcal{F}^{-1}(|\xi|^{2\alpha}\mathcal{F}u(\cdot,\xi))(x)\,\,\,\,\forall\,\, x\in \mathbb{R}^{n}
$$
for which $\mathcal{F}$ is the Fourier transform and $\mathcal{F}^{-1}$ is its inverse. Specifically, we are interested in the trace of such a fractional temperature field (existing as the weak solution of \eqref{1.1})
$$
u(t,x)=R_\alpha f(t,x)+S_\alpha g(t,x)
$$
with
$$
\begin{cases}
R_{\alpha}f(t,x)=e^{-t(-\Delta_x)^{\alpha}}f(x)=\int_{\mathbb{R}^{n}}K_{t}^{(\alpha)}(x-y)f(y)dy;\\
S_{\alpha}g(t,x)=\int_{0}^{t}e^{-(t-s)(-\Delta_x)^{\alpha}}g(s,x)ds=\int_{\mathbb{R}^n}\big(\int_0^t K_{t-s}^{(\alpha)}(x-y)g(s,y)ds\big)dy,
\end{cases}
$$
where $K_{t}^{(\alpha)}(x)$ is the fractional heat kernel
$$
K_{t}^{(\alpha)}(x)\equiv(2\pi)^{-\frac{n}{2}}\int_{\mathbb{R}^{n}}e^{ix\cdot y-t|y|^{2\alpha}}dy \,\,\,\,\,\forall\,\,(t,x)\in\mathbb{R}_{+}^{1+n}
$$
whose endpoint $\alpha=1$ and middle-point $\alpha=1/2$ lead to the heat kernel and Poisson kernel:
$$
K_{t}^{(1)}(x)=(4\pi)^{-\frac{n}{2}}e^{-\frac{|x|^{2}}{4t}}\,\,\,\,\&\,\,\,\,K_{t}^{(\frac{1}{2})}(x)=\pi^{-\frac{n+1}{2}}\Gamma \Big(\frac{n+1}{2}\Big)\frac{t}{(t^{2}+|x|^{2})^{\frac{n+1}{2}}}
$$
with $\Gamma(\cdot)$ being the classical gamma function. Although there is no explicit formula for $K_{t}^{(\alpha)}(x)$ under $\alpha\in (0,1)\setminus\{1/2\}$ (cf. \cite{CDDF, CW, MYZ, NSS, NSY, NY, W1, W2, WY, Z2}), the following estimates are not only valid but also practical (cf. \cite{ARAG, BC, BG, CS, XZ}):
$$
\begin{cases}
K_{t}^{(\alpha)}(x)\approx \min\{t^{-\frac{n}{2\alpha}},t|x|^{-(n+2\alpha)}\}\approx \frac{t}{(t^{\frac{1}{2\alpha}}+|x|)^{n+2\alpha}}\,\,\,\,\forall\,\,(t,x)\in\mathbb{R}_{+}^{1+n};\\
\int_{\mathbb{R}^{n}}K_{t}^{(\alpha)}(x)dx=1\,\,\,\,\forall\,\,t\in (0,\infty).
\end{cases}
$$

As explored in \cite{CX, JXYZ}, the regularity of $u(t,x)$ sheds some light on the traces/restrictions of $R_\alpha f(t,x)$ and $S_\alpha g(t,x)$ to subsets of $\mathbb R^{1+n}_+$ of $(1+n)$-dimensional Lebesgue measure zero. Here $f(x)$ and $g(t,x)$ are arbitrary functions of the usual Lebesgue classes $L^p(\mathbb R^n)$ and $L^{p}(\mathbb R^{1+n}_+)$, respectively. In order to characterize the traces of $R_\alpha f(t,x)$ and $S_\alpha g(t,x)$ on a given compact exceptional set $K\subset\mathbb R^{1+n}_+$, we investigate nonnegative Radon measures supported on $K$ such that under $1<p,q<\infty$ the mapping $R_\alpha: L^p(\mathbb R^n)\mapsto L^q_\mu(\mathbb R^{1+n}_+)$ and $S_\alpha: L^p(\mathbb R^{1+n}_+)\mapsto L^q_\mu(\mathbb R^{1+n}_+)$ are continuous - namely -
\begin{equation}
\label{eR}
\left(\int_{\mathbb R^{1+n}_+}|R_\alpha f(t,x)|^q\,d\mu(t,x)\right)^\frac1q\lesssim \|f\|_{L^p(\mathbb R^n)}
\end{equation}
and
\begin{equation}
\label{eS}
\left(\int_{\mathbb R^{1+n}_+}|S_\alpha g(t,x)|^q\,d\mu(t,x)\right)^\frac1q\lesssim \|g\|_{L^p(\mathbb R^{1+n}_+)},
\end{equation}
where the symbol $A\lesssim B$ means $A\leq cB$ for a positive constant $c$ - moreover - $A\approx B$ stands for both $A\lesssim B$ and $B\lesssim A$.

A careful examination of \eqref{eR} and \eqref{eS} indicates that they can be naturally unified as:
\begin{equation}
\label{eT}
\left(\int_{\mathbb R^{1+n}_+}|T_\alpha h|^q\,d\mu\right)^\frac1q\lesssim \|h\|_{L^p(\mathbb X)}=\begin{cases} \|f\|_{L^p(\mathbb R^{n})}\ \hbox{as}\ (T_\alpha,h,\mathbb X)=(R_\alpha,f,\mathbb R^n);\\
\|g\|_{L^p(\mathbb R^{1+n}_+)}\ \hbox{as}\ (T_\alpha,h,\mathbb X)=(S_\alpha,g,\mathbb R^{1+n}_+).
\end{cases}
\end{equation}
Describing such a measure $\mu$ on $\mathbb R^{1+n}_+$ depends on a concept of the induced capacity. For a compact set $K\subset\mathbb R^{1+n}_+$ let
$$
C_{p}^{(T_{\alpha})}(K)=\inf\{\|h\|_{L^{p}(\mathbb X)}^{p}: h\geq 0 \,\,\&\,\,T_{\alpha} h\geq \textbf{1}_{K}\},
$$
where $\textbf{1}_{K}$ is the characteristic function of $K$. Then, for an open subset $O$ of $\mathbb{R}_{+}^{1+n}$ let
$$
C_{p}^{(T_{\alpha})}(O)=\sup\{C_{p}^{(T_{\alpha})}(K): \hbox{compact} \,K \subset O\},
$$
and hence for any set $E\subset \mathbb{R}_{+}^{1+n}$ let
$$
C_{p}^{(T_{\alpha})}(E)=\inf\{C_{p}^{(T_{\alpha})}(O): \hbox{open} \,O \supset E\}.
$$
According to \cite{CX, JXYZ}, if
$$
B^{(\alpha)}_{r}(t_{0}, x_{0})\equiv\{(t,x)\in \mathbb{R}_{+}^{1+n}: r^{2\alpha}<t-t_{0}<2r^{2\alpha} \,\,\&\,\, |x-x_{0}|<r\}
$$
stands for the parabolic ball with centre $(t_{0}, x_{0})\in\mathbb R^{1+n}_+$ and radius $r>0$, then
\begin{equation}
\label{capBall}
C_{p}^{(T_\alpha)}\big(B^{(\alpha)}_{r}(t_{0}, x_{0})\big)\approx\begin{cases} r^n\ \hbox{as}\ T_\alpha=R_\alpha;\\
r^{n+2\alpha(1-p)}\ \hbox{as}\ T_\alpha=S_\alpha\ \&\ 1<p<1+\frac{n}{2\alpha}.
\end{cases}
\end{equation}

Below is a tracing principle for the fractional heat equation \eqref{1.1}.

\begin{theorem}
\label{MT} Let $0<\alpha<1$ and $1<p<1+\frac{n}{2\alpha}$. Then
$$
\eqref{eT}\Leftrightarrow\begin{cases} \sup\left\{\frac{\mu\big(B^{(\alpha)}_{r}(t_{0}, x_{0})\big)}{ \big(C_p^{(T_\alpha)}\big(B^{(\alpha)}_{r}(t_{0}, x_{0})\big)\big)^{q/p}}:\ (r,t_0,x_0)\in\mathbb R_+\times\mathbb R_+\times \mathbb R^n\right\}<\infty\ \hbox{as}\ p<q;\\
\sup\left\{\frac{\mu(K)}{C_p^{(T_\alpha)}(K)}:\ \hbox{compact}\ K\subset\mathbb R^{1+n}_+\right\}<\infty\ \hbox{as}\ p=q;\\
\int_{\mathbb R^{1+n}_+}\left(\int_0^\infty \Big(\frac{\mu(B^{(\alpha)}_{r}(t_{0}, x_{0}))}{C_p^{(T_\alpha)}(B^{(\alpha)}_{r}(t_{0}, x_{0}))}\Big)^\frac1{p-1}\,\frac{dr}{r}\right)^\frac{q(p-1)}{p-q}\,d\mu(t_0,x_0)<\infty\ \hbox{as}\ p>q.
\end{cases}
$$
\end{theorem}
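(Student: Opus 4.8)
The plan is to treat the three regimes ($p<q$, $p=q$, $p>q$) through the standard machinery of capacitary strong-type inequalities, adapted to the parabolic geometry dictated by the balls $B^{(\alpha)}_r(t_0,x_0)$. The unifying observation is that $\eqref{eT}$ is equivalent to a trace inequality for a positive integral operator $T_\alpha$ whose kernel satisfies, by the two-sided bound $K^{(\alpha)}_t(x)\approx t\big(t^{1/2\alpha}+|x|\big)^{-(n+2\alpha)}$ recorded in the excerpt, a parabolic quasi-metric structure: level sets of the kernel are comparable to the parabolic balls, and these balls enjoy a doubling/engulfing property with respect to the quasi-distance $\rho\big((t,x),(s,y)\big)\approx |t-s|^{1/2\alpha}+|x-y|$. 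This places us squarely in the setting where Adams-type and Maz'ya-type capacitary inequalities apply.

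\emph{First}, I would establish the easy direction $\eqref{eT}\Rightarrow(\text{RHS})$ by testing. For the diagonal-and-superdiagonal cases one inserts into $\eqref{eT}$ the extremal (or near-extremal) functions $h$ witnessing $C_p^{(T_\alpha)}\big(B^{(\alpha)}_r(t_0,x_0)\big)$, i.e. $h\ge 0$ with $T_\alpha h\ge \mathbf 1_{B^{(\alpha)}_r(t_0,x_0)}$ and $\|h\|_{L^p(\mathbb X)}^p$ within a constant of the capacity; since $|T_\alpha h|^q\ge \mathbf 1_{B^{(\alpha)}_r(t_0,x_0)}$ on the ball, the left side of $\eqref{eT}$ dominates $\mu\big(B^{(\alpha)}_r(t_0,x_0)\big)^{1/q}$, giving the $p<q$ bound; the $p=q$ case is identical with a general compact $K$ in place of the ball. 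For $p>q$ one runs the same test-function argument against a function adapted to a Whitney-type decomposition and invokes the (discrete, then continuous) form of the Wolff-type potential estimate; this is the sharpened Adams--Dahlberg computation and yields the Wolff-potential integral on the RHS.

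\emph{Second}, for the converse $(\text{RHS})\Rightarrow\eqref{eT}$, the strategy is the capacitary strong-type inequality: I would show that the RHS condition forces
$$
\int_0^\infty q\lambda^{q-1} C_p^{(T_\alpha)}\big(\{T_\alpha h>\lambda\}\big)\,\frac{d\lambda}{\lambda}\cdot\lambda^{?}\ \lesssim\ \|h\|_{L^p(\mathbb X)}^q
$$
— more precisely, the known capacitary inequality $\int_0^\infty C_p^{(T_\alpha)}\big(\{T_\alpha h>\lambda\}\big)\,d(\lambda^p)\lesssim \|h\|^p_{L^p(\mathbb X)}$ valid for positive kernels with the maximal-function comparability $T_\alpha h\approx$ (parabolic fractional maximal/Riesz potential of $h$), combined with the layer-cake representation of $\int |T_\alpha h|^q\,d\mu$ and Hölder's inequality in the $p>q$ case. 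Concretely: decompose $\{T_\alpha h>\lambda\}$ via a Whitney/Besicovitch covering into parabolic balls, bound $\mu$ of each piece by its capacity to the power $q/p$ (when $p<q$), sum, and integrate in $\lambda$; when $p=q$ the ball-to-compact upgrade uses subadditivity of $C_p^{(T_\alpha)}$ and the metric-space covering lemma; when $p>q$ one first dualizes $\eqref{eT}$ to an inequality for the adjoint acting on $L^q_\mu$, reducing matters to a Wolff-potential pointwise bound $\mathcal W^\mu_{p}\lesssim(\text{RHS datum})$ that is then fed into the Wolff inequality of Hedberg--Wolff type.

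\emph{The main obstacle} I anticipate is the comparison between the operator $T_\alpha$ and the corresponding parabolic fractional maximal operator (and Riesz-type potential) on the quasi-metric space $(\mathbb R^{1+n}_+,\rho,\mu)$ — i.e. proving $T_\alpha h(t,x)\approx \sup_{r>0} r^{-n}\int_{B^{(\alpha)}_r(t,x)\cap \mathbb X}|h|$ up to the right scaling, separately for the two cases $(R_\alpha,\mathbb R^n)$ and $(S_\alpha,\mathbb R^{1+n}_+)$, because the former is a ``boundary'' trace operator (the ball in $\mathbb R^n$ is one dimension lower) while the latter is a genuine parabolic Riesz potential; getting the kernel estimates to interface cleanly with the covering arguments, and verifying that the capacities in $\eqref{capBall}$ really are the exponents produced by the potential-theoretic machinery, is where the technical weight lies. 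Once that comparison is in hand, the three-regime dichotomy is the standard Maz'ya--Adams--Verbitsky trichotomy and follows by the argument sketched above.
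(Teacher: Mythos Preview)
Your plan is correct and matches the paper's architecture: testing on capacitary extremals for necessity, the capacitary strong-type inequality $\int_0^\infty C_p^{(T_\alpha)}(\{T_\alpha h>\lambda\})\,d\lambda^p\lesssim\|h\|_{L^p(\mathbb X)}^p$ (built from an extremal capacitary measure $\mu_K$) for sufficiency when $p\le q$, and duality plus a Hedberg--Wolff energy identity $\|T_\alpha^*\mu\|_{L^{p'}}^{p'}\approx\int P_{\alpha p}\mu\,d\mu$ for $p>q$. The paper resolves your ``main obstacle'' exactly as you suspect---via a good-$\lambda$ inequality comparing $T_\alpha^*\mu$ with the parabolic fractional maximal function---and handles the discrete-to-continuous passage for the Wolff potential (with possibly non-doubling $\mu$) by the Cascante--Ortega--Verbitsky translation-averaging trick rather than a Whitney decomposition.
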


Here, it should be noted that $R_\alpha$-case of Theorem \ref{MT} under $p\le q$ has been treated in \cite[Theorems 3.2-3.3]{CX}. Of course, the remaining cases of Theorem \ref{MT} are new. Perhaps, it is worth to point out that under $p=q$,
$$
\sup\left\{\frac{\mu(K)}{C_p^{(T_\alpha)}(K)}:\ \hbox{compact}\ K\subset\mathbb R^{1+n}_+\right\}<\infty
$$
implies
$$
\sup\left\{\frac{\mu(B_r^\alpha(t_0,x_0))}{C_p^{(T_\alpha)}(B_r^\alpha(t_0,x_0))}:\ B_r^\alpha(t_0,x_0)\subset\mathbb R^{1+n}_+\right\}<\infty
$$
but not conversely in general - \cite[Theorem 4(ii)]{Adams} and its argument might be helpful to produce a ball-based sufficient condition for \eqref{eT} to hold. Upon $d\mu=dtdx$ in $S_\alpha$-case of Theorem \ref{MT} we have
$\mu(B_{r}^{(\alpha)}(t_{0},x_{0}))\approx r^{n+2\alpha}$, thereby finding that (cf. \cite[Theorem 1.4]{Z1}) for $g\in L^{p}(\mathbb{R}_{+}^{1+n})$ one has
$$
\|S_{\alpha}g\|_{L^{\widetilde{q}}(\mathbb{R}_{+}^{1+n})}\lesssim \|g\|_{L^{p}(\mathbb{R}_{+}^{1+n})}\,\,\,\,\hbox{where}\,\,\widetilde{q}=p\left(1+\frac{2\alpha p}{n+2\alpha-2\alpha p}\right)>p.
$$
Although $R_\alpha$ and $S_\alpha$ behave similarly, the argument for Theorem \ref{MT} will be still split into two parts - one for $R_\alpha$ in Section 2 and another one for $S_\alpha$ in Section 3 - this is because the subtle difference between  $R_\alpha$ and $S_\alpha$ can be seen clearly from such a splitting arrangement.

\section{$R_{\alpha}$'s tracing}\label{section2}

In this section we verify Theorem \ref{MT} for $T_\alpha=R_\alpha$. To do so, we need three lemmas as seen below.

The first is about the dual representation of $C_p^{(R_\alpha)}(K)$ for a given compact set $K\subset\mathbb R^{1+n}_+$.

\begin{lemma}
\label{l20} Let $\mathcal{U}^+(K)$ be the class of all nonnegative Radon measures $\mu$ with compact support $K\subset\mathbb R^{1+n}_+$ and the total variation $\|\mu\|$. Then
$$
C_{p}^{(R_\alpha)}(K)=\sup\left\{\|\mu\|^{p}:\ \ \mu\in \mathcal{U}^{+}(K) \,\,\&\,\,\int_{\mathbb R^n}\Big(\int_{\mathbb R^{1+n}_+}K_t^\alpha(x-y)\,d\mu(t,y)\Big)^\frac{p}{p-1}\,dx\leq 1\right\}.
$$
\end{lemma}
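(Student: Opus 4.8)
The lemma is a capacity duality of Maz'ya--Adams type, and the plan is to establish the two inequalities separately; write $p'=\frac{p}{p-1}$ and, for $\mu\in\mathcal U^+(K)$, abbreviate the potential occurring in the statement by $R_\alpha^*\mu(x):=\int_{\mathbb R^{1+n}_+}K_t^{(\alpha)}(x-y)\,d\mu(t,y)$. The inequality ``$\,\sup(\cdots)\le C_p^{(R_\alpha)}(K)\,$'' is soft: if $f\ge0$ is admissible ($R_\alpha f\ge\mathbf 1_K$) and $\mu\in\mathcal U^+(K)$ has $\|R_\alpha^*\mu\|_{L^{p'}(\mathbb R^n)}\le1$, then, $K_t^{(\alpha)}$ being even in space, Tonelli's theorem and H\"older's inequality give
$$\|\mu\|=\mu(K)\le\int_K R_\alpha f\,d\mu=\int_{\mathbb R^n}f(x)\,R_\alpha^*\mu(x)\,dx\le\|f\|_{L^p(\mathbb R^n)}\,\|R_\alpha^*\mu\|_{L^{p'}(\mathbb R^n)}\le\|f\|_{L^p(\mathbb R^n)},$$
and taking the infimum over admissible $f$ and then the supremum over admissible $\mu$ finishes this half (which in particular disposes of the case $C_p^{(R_\alpha)}(K)=0$).

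For the reverse inequality I would argue by minimax. First, $C_p^{(R_\alpha)}(K)<\infty$, because $K$ being a compact subset of $\mathbb R^{1+n}_+$ forces the time variable on $K$ into a compact subinterval of $(0,\infty)$, so a large multiple of the indicator of a large ball is admissible by $\int_{\mathbb R^n}K_t^{(\alpha)}=1$ and the kernel bounds of Section~\ref{section1}. Put $N:=C_p^{(R_\alpha)}(K)^{1/p}$, equip $X:=\{f\in L^p(\mathbb R^n):f\ge0\}$ with the weak topology and $Y:=\{\mu\ge0\text{ Radon on }K:\mu(K)\le N\}$ with the weak-$*$ topology of $C(K)^*$, and set
$$F(f,\mu):=\|f\|_{L^p(\mathbb R^n)}+\int_K\big(1-R_\alpha f\big)\,d\mu .$$
Because $K$ stays away from $\{t=0\}$ and $t=\infty$ and $K_t^{(\alpha)}(x)\lesssim\min\{t^{-\frac{n}{2\alpha}},t|x|^{-(n+2\alpha)}\}$, every finite $\mu$ on $K$ has $R_\alpha^*\mu\in L^{p'}(\mathbb R^n)$, so $F$ is finite-valued, convex in $f$ and affine in $\mu$; its $f$-sublevel sets are convex and norm-closed, hence weakly closed, so $F(\cdot,\mu)$ is weakly lower semicontinuous, while $1-R_\alpha f|_K\in C(K)$ makes $F(f,\cdot)$ weak-$*$ continuous. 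Since $Y$ is convex and weak-$*$ compact (Banach--Alaoglu plus weak-$*$ closedness of the positive cone), Sion's minimax theorem yields $\inf_X\sup_Y F=\sup_Y\inf_X F$.

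It remains to evaluate both sides. On the left: $\sup_{\mu\in Y}F(f,\mu)=\|f\|_{L^p(\mathbb R^n)}$ if $f$ is admissible (optimal $\mu=0$); if $f$ is not admissible, pick $x_0\in K$ with $R_\alpha f(x_0)=\min_K R_\alpha f<1$, and combine admissibility of $(\min_K R_\alpha f)^{-1}f$ (so $\|f\|_{L^p(\mathbb R^n)}\ge N\min_K R_\alpha f$) with testing $F(f,\cdot)$ against $N\delta_{x_0}$ to get $\sup_{\mu\in Y}F(f,\mu)\ge\|f\|_{L^p(\mathbb R^n)}+N(1-\min_K R_\alpha f)\ge N$; hence $\inf_X\sup_Y F=C_p^{(R_\alpha)}(K)^{1/p}$. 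On the right: for fixed $\mu$, Fubini turns $\int_K R_\alpha f\,d\mu$ into $\int_{\mathbb R^n}f\,R_\alpha^*\mu$, and since $f\mapsto\|f\|_{L^p(\mathbb R^n)}-\int_{\mathbb R^n}f\,R_\alpha^*\mu$ is positively $1$-homogeneous and nonnegative throughout $\{f\ge0\}$ exactly when $\|R_\alpha^*\mu\|_{L^{p'}(\mathbb R^n)}\le1$ (the $(L^p,L^{p'})$-duality remaining sharp for the nonnegative $R_\alpha^*\mu$ even under the sign restriction), one obtains $\inf_{f\in X}F(f,\mu)=\mu(K)$ when $\|R_\alpha^*\mu\|_{L^{p'}(\mathbb R^n)}\le1$ and $-\infty$ otherwise; the soft half already shows $\|R_\alpha^*\mu\|_{L^{p'}(\mathbb R^n)}\le1$ entails $\mu(K)\le N$, so the constraint $\mu(K)\le N$ is inactive and $\sup_Y\inf_X F$ is precisely the right-hand supremum of the lemma raised to the power $1/p$. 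Equating the two sides and taking $p$-th powers proves the lemma.

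The main obstacle I anticipate is exactly supplying the hypotheses of the minimax theorem: proving $C_p^{(R_\alpha)}(K)<\infty$ (so the competing measures live in a single fixed weak-$*$ compact ball) and the weak lower semicontinuity of $f\mapsto\int_K R_\alpha f\,d\mu$, both of which hinge on the uniform membership $K_t^{(\alpha)}(x-\cdot)\in L^{p'}(\mathbb R^n)$ as $(t,x)$ ranges over the compact $K\subset\mathbb R^{1+n}_+$, a consequence of the decay $K_t^{(\alpha)}(x)\lesssim|x|^{-(n+2\alpha)}$ recorded in Section~\ref{section1}.
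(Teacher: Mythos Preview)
Your proof is correct and self-contained, but the route is genuinely different from the paper's. The paper does not prove the duality from scratch: it spends a few lines identifying the adjoint $R_\alpha^*$ (first for $h\in C_0^\infty(\mathbb R^{1+n}_+)$ via the Fubini identity $\int R_\alpha f\cdot h=\int f\cdot R_\alpha^* h$, then for measures $\mu$ via the Riesz representation theorem), and then simply invokes \cite[Proposition~1]{CX} for the dual formula $C_p^{(R_\alpha)}(K)=\sup\{\|\mu\|^p:\mu\in\mathcal U^+(K),\ \|R_\alpha^*\mu\|_{L^{p'}}\le1\}$. In other words, the paper reduces the lemma to an already-published abstract capacity duality. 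You instead prove that duality directly by a Sion minimax argument, carefully engineering the payoff $F(f,\mu)=\|f\|_{L^p}+\int_K(1-R_\alpha f)\,d\mu$ so that the compact side is the measure side and the soft inequality renders the a~priori bound $\mu(K)\le N$ inactive. The paper's approach is shorter on the page but outsources the substance; yours is longer but makes the lemma independent of \cite{CX}, and the verification of the minimax hypotheses (finiteness of the capacity, $R_\alpha^*\mu\in L^{p'}$ uniformly over $\mu$ supported in $K$, continuity of $R_\alpha f|_K$) is exactly where the compactness of $K\subset\mathbb R^{1+n}_+$ and the kernel decay are actually used.
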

\begin{proof}
Note that
$$
\int_{\mathbb{R}_{+}^{1+n}}R_{\alpha}f(t,x)h(t,x)dtdx=\int_{\mathbb{R}^{n}}f(x)\Big(\int_{\mathbb{R}_{+}^{1+n}}K^{(\alpha)}_{t}(x-y)h(t,y)dtdy\Big)dx
$$
holds for all $(f,h)\in C_{0}^{\infty}(\mathbb{R}^{n})\times C_{0}^{\infty}(\mathbb{R}_{+}^{1+n})$ where $C^\infty_0(\mathbb X)$ stands for the class of infinitely differentiable functions with compact support in $\mathbb X=\mathbb R^n\ \hbox{or}\ \mathbb R^{1+n}_+$. Thus, the adjoint operator of $R_{\alpha}$ is defined by
$$
(R_{\alpha}^{*}h)(x)=\int_{\mathbb{R}_{+}^{1+n}}K_{t}^{(\alpha)}(x-y)h(t,y)dtdy  \,\,\,\,\forall \,\,h\in C_{0}^{\infty}(\mathbb{R}_{+}^{1+n}).
$$
For any nonnegative Radon measure $\mu$ in $\mathbb R^{1+n}_+$ and a continuous function $f$ with a compact support in $\mathbb{R}^{n}$, one has
$$
\left|\int_{\mathbb{R}_{+}^{1+n}}R_{\alpha}fd\mu\right|\lesssim \|f\|_{L^\infty(\mathbb{R}^{n})}\|\mu\|.
$$
Therefore, the Riesz representation theorem yields a Borel measure $\nu$ on $\mathbb{R}^{n}$ such that
$$
\int_{\mathbb{R}_{+}^{1+n}}R_{\alpha}fd\mu=\int_{\mathbb{R}^{n}}fd\nu \,\,\,\,\forall\,\,f\geq 0.
$$
This means that $\nu=R_{\alpha}^{*}\mu$ can be defined by
$$
R_{\alpha}^{*}\mu(x)=\int_{\mathbb{R}_{+}^{1+n}}K_{t}^{(\alpha)}(x-y)d\mu(t,y).
$$
According to \cite[Proposition 1]{CX}, one gets
$$
C_{p}^{(R_\alpha)}(K)=\sup\Big\{\|\mu\|^{p}:\ \mu\in \mathcal{U}^{+}(K) \,\,\&\,\,\|R_{\alpha}^{*} \mu\|_{L^{\frac{p}{p-1}}(\mathbb{R}^{n})}\leq 1\Big\}.
$$
\end{proof}

The second is about $L^p$-boundedness of the fractional maximal operator of parabolic type.

 \begin{lemma}\label{l21} For a nonnegative Radon measure $\mu$ on $\mathbb R^{1+n}_+$ let
 $$
  M_{\alpha}\mu(x)=\sup_{r>0}{r^{-n}}\mu\big({B^{(\alpha)}_{r}(r^{2\alpha}, x)}\big)
  $$
  be its fractional parabolic maximal function. Then
 $$
 \|M_{\alpha}\mu\|_{L^{p}(\mathbb{R}^{n})}\approx \|R^\ast_\alpha\mu\|_{L^p(\mathbb R^n)}\ \ \forall\  \ p\in (1,\infty).
 $$
 \end{lemma}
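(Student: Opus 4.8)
The statement is an equivalence $\|M_\alpha\mu\|_{L^p}\approx\|R^\ast_\alpha\mu\|_{L^p}$, and I would prove the two inequalities separately. The bound $\|M_\alpha\mu\|_{L^p(\mathbb R^n)}\lesssim\|R^\ast_\alpha\mu\|_{L^p(\mathbb R^n)}$ is the soft one and follows from a pointwise estimate: if $(t,y)\in B^{(\alpha)}_r(r^{2\alpha},x)$ then $2r^{2\alpha}<t<3r^{2\alpha}$ and $|x-y|<r$, so $t^{1/(2\alpha)}+|x-y|\approx r$ and $t\approx r^{2\alpha}$, whence the two-sided bound on $K_t^{(\alpha)}$ recorded in Section~\ref{section1} gives $K_t^{(\alpha)}(x-y)\approx t\,(t^{1/(2\alpha)}+|x-y|)^{-(n+2\alpha)}\gtrsim r^{-n}$; integrating this over $B^{(\alpha)}_r(r^{2\alpha},x)$ against $\mu$ and taking the supremum in $r>0$ yields $M_\alpha\mu(x)\lesssim R^\ast_\alpha\mu(x)$, hence the asserted $L^p$ inequality for every $p$ (the condition $p>1$ is not needed here).

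For the reverse inequality $\|R^\ast_\alpha\mu\|_{L^p(\mathbb R^n)}\lesssim\|M_\alpha\mu\|_{L^p(\mathbb R^n)}$ I would first reduce, by restricting $\mu$ to an increasing exhaustion of $\mathbb R^{1+n}_+$ by compact sets (each restriction has bounded, compactly supported potential, so its $L^p$ norm is finite) and passing to the limit on both sides via monotone convergence, to the case $\|R^\ast_\alpha\mu\|_{L^p}<\infty$. One should keep in mind that there is \emph{no} pointwise domination $R^\ast_\alpha\mu\lesssim M_\alpha\mu$ — a measure carried by $\{(t,y):|y|\approx1,\ t\downarrow0\}$ keeps $R^\ast_\alpha\mu(0)$ bounded away from $0$ while $M_\alpha\mu(0)$ can be made arbitrarily small — so the inequality is genuinely an $L^p$ phenomenon, and I would obtain it through a good-$\lambda$ inequality in the spirit of Muckenhoupt–Wheeden: writing $\mathcal M$ for the (spatial) Hardy–Littlewood maximal operator on $\mathbb R^n$, I claim that for a large absolute constant $A>1$ there is $\theta>0$ with
$$\big|\{x\in\mathbb R^n:\ R^\ast_\alpha\mu(x)>A\lambda,\ \mathcal M(M_\alpha\mu)(x)\le\gamma\lambda\}\big|\ \le\ C\,\gamma^{\theta}\,\big|\{x\in\mathbb R^n:\ R^\ast_\alpha\mu(x)>\lambda\}\big|$$
for all $\lambda>0$, $\gamma\in(0,1)$. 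Granting this, multiplying by $p\lambda^{p-1}$, integrating in $\lambda$, using the finiteness of $\|R^\ast_\alpha\mu\|_{L^p}$ to absorb the term $A^pC\gamma^{\theta}\|R^\ast_\alpha\mu\|_{L^p}^p$ for $\gamma$ small, gives $\|R^\ast_\alpha\mu\|_{L^p}\lesssim\|\mathcal M(M_\alpha\mu)\|_{L^p}\lesssim\|M_\alpha\mu\|_{L^p}$, the last step being the $L^p$-boundedness of $\mathcal M$ — which is where $p\in(1,\infty)$ is used — after which the exhaustion is removed by monotone convergence again.

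To prove the good-$\lambda$ inequality I would perform a Whitney decomposition $\{R^\ast_\alpha\mu>\lambda\}=\bigcup_iQ_i$ into dyadic cubes with $\ell(Q_i)\approx\dist(Q_i,\{R^\ast_\alpha\mu\le\lambda\})$ and argue cube by cube, discarding any $Q_i$ disjoint from $\{\mathcal M(M_\alpha\mu)\le\gamma\lambda\}$. For a surviving $Q=Q_i$ fix $z_Q$ with $R^\ast_\alpha\mu(z_Q)\le\lambda$ at distance $\approx\ell(Q)$ from $Q$, a point $w_Q\in Q$ with $\mathcal M(M_\alpha\mu)(w_Q)\le\gamma\lambda$, and split $\mu=\mu'+\mu''$, where $\mu'$ is the restriction of $\mu$ to the parabolic cylinder $\{(t,y):\ t^{1/(2\alpha)}<N\ell(Q),\ |y-c_Q|<N\ell(Q)\}$ for a large $N=N(A)$. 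The far part is controlled by kernel regularity: off the cylinder one has $t^{1/(2\alpha)}+|x-y|\gtrsim N\ell(Q)$ uniformly for $x\in Q$, and the gradient estimate $|\nabla_xK_t^{(\alpha)}(x-y)|\lesssim t\,(t^{1/(2\alpha)}+|x-y|)^{-(n+2\alpha+1)}$ yields $|K_t^{(\alpha)}(x-y)-K_t^{(\alpha)}(z_Q-y)|\lesssim N^{-1}K_t^{(\alpha)}(z_Q-y)$, so that $R^\ast_\alpha\mu''(x)\le(1+CN^{-1})R^\ast_\alpha\mu''(z_Q)\le2\lambda$ for $N$ large, and on the bad set $R^\ast_\alpha\mu'(x)>(A-2)\lambda\ge2\lambda$. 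For the near part I would decompose $\mu'=\sum_{m\ge0}\mu'_m$ into dyadic time-shells $t^{1/(2\alpha)}\in[2^{-m-1}N\ell(Q),2^{-m}N\ell(Q))$; each shell lives at the single parabolic scale $r_m:=2^{-m}N\ell(Q)$, so using $\int_{\mathbb R^n}K_t^{(\alpha)}(x-y)\,dy=1$ its potential is dominated by the convolution of the spatial marginal of $\mu'_m$ with an $L^1$-normalized radial bump at scale $r_m$, while covering $B_{N\ell(Q)}(c_Q)$ by $\approx(N2^m)^n$ balls of radius $r_m$ and using the definition of $M_\alpha$ (together with the elementary fact that a bounded number of the synchronized slabs $B^{(\alpha)}_\rho(\rho^{2\alpha},\cdot)$ with $\rho\approx r_m$ cover the time-window of $\mu'_m$) bounds the relevant mass by a local average of $M_\alpha\mu$. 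Summing the resulting geometric series in $m$ — whose ratio $2^{-2\alpha}$ comes precisely from the gain $t\le(t^{1/(2\alpha)})^{2\alpha}$ inside the kernel — and invoking $\mathcal M(M_\alpha\mu)(w_Q)\le\gamma\lambda$ gives, shell by shell with the thresholds $2\lambda=\sum_m2^{-m}\lambda$, a bound $|\{x\in Q:R^\ast_\alpha\mu'(x)>2\lambda\}|\lesssim\gamma\,\ell(Q)^n$ by Chebyshev; adding over the Whitney cubes yields the good-$\lambda$ inequality with $\theta=1$.

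The main obstacle is exactly the near part. In the elliptic / Riesz-potential analogue the local mass sits in a single ball and is dominated by one value of the fractional maximal function; here it fills a parabolic cylinder whose vertical ($t$-)extent contains infinitely many parabolic scales, so neither a single value of $M_\alpha\mu$ nor a naive term-by-term estimate of $\sum_m$ can control it. Beating this ``vertical stacking'' is the heart of the argument and forces the three devices above: the dyadic-in-time splitting, the bounded-overlap covering of a prescribed time-window by the synchronized slabs $B^{(\alpha)}_\rho(\rho^{2\alpha},\cdot)$, and the passage from $M_\alpha\mu$ to $\mathcal M(M_\alpha\mu)$ (harmless in $L^p$ for $p>1$), after which the geometric summation in the number of scales converges and closes the estimate.
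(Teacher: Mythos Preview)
Your strategy coincides with the paper's: the easy direction is the same pointwise estimate $M_\alpha\mu\lesssim R^\ast_\alpha\mu$, and for the hard direction both you and the paper appeal to a good-$\lambda$ inequality and then integrate against $\lambda^{p-1}$ and absorb. The paper simply records the good-$\lambda$ inequality
\[
|\{R^\ast_\alpha\mu>a\lambda\}|\le b\,\varepsilon^{(n+2\alpha)/n}\,|\{R^\ast_\alpha\mu>\lambda\}|+|\{M_\alpha\mu>\varepsilon\lambda\}|
\]
with $M_\alpha\mu$ itself (no auxiliary Hardy--Littlewood maximal function) as a ``slight modification'' of \cite[(3.6.1)]{AH}, gives no details, and passes directly to the integration step. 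So at the level of overall architecture your proof and the paper's are the same.

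Where you go further is in trying to actually \emph{prove} the good-$\lambda$ inequality, and you correctly isolate the obstruction: the near-part parabolic cylinder contains infinitely many synchronized time-slabs, so a single value of $M_\alpha\mu$ cannot control its mass. Your cure --- replacing $M_\alpha\mu$ by $\mathcal M(M_\alpha\mu)$ and averaging via Fubini --- is a natural idea, but the sketch has a gap in the final summation. The Fubini bound you indicate gives only the \emph{uniform} estimate $\|\mu'_m\|\lesssim (N\ell(Q))^{n}\gamma\lambda$, the same for every shell $m$; combining this with $\int K_t\,dx=1$ and Chebyshev at thresholds $\lambda_m=2^{-m}\lambda$ produces
\[
\big|\{x\in Q:\ R^\ast_\alpha\mu'_m(x)>\lambda_m\}\big|\ \lesssim\ \frac{\|\mu'_m\|}{\lambda_m}\ \lesssim\ 2^{m}(N\ell(Q))^{n}\gamma,
\]
which \emph{diverges} when summed over $m\ge0$. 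The advertised geometric ratio $2^{-2\alpha}$ does not emerge from the ingredients you list (the $L^1$-normalization of the kernel plus the shell-by-shell mass bound), because the factor $t\approx r_m^{2\alpha}$ in the numerator of $K_t$ is exactly cancelled by the $r_m^{-(n+2\alpha)}$ in the denominator when one passes to the $L^1$ norm. To close this step you need either a genuinely sharper shell estimate (e.g.\ exploiting that for large $m$ only the portion of $\mu'_m$ spatially near $Q$ contributes to $\int_Q R^\ast_\alpha\mu'_m$, which still leaves a non-summable constant per shell), or a different organization of the near part that follows the Adams--Hedberg scheme more closely and recovers the exponent $(n+2\alpha)/n$ the paper records.
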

 \begin{proof} A straightforward estimation with $x\in\mathbb R^n$ and $R^\ast_\alpha\mu(x)$ gives
 $$
 R^\ast_\alpha\mu(x)\gtrsim \int_{B^{(\alpha)}_{r}(r^{2\alpha}, x)}\frac{t}{(t^{\frac{1}{2\alpha}}+|x-y|)^{n+2\alpha}}d\mu(t,y)\gtrsim \frac{\mu(B^{(\alpha)}_{r}(r^{2\alpha}, x))}{r^{n}}\ \ \forall\ \ r>0,
 $$
 whence
 $$
 R^{\alpha}_\alpha\mu(x)\gtrsim M_{\alpha}\mu(x).
 $$
 This implies
 $$
 \|M_{\alpha}\mu\|_{L^{p}(\mathbb{R}^{n})}\lesssim \|R^\ast_{\alpha}\mu\|_{L^{p}(\mathbb{R}^{n})}.
 $$

 To prove the converse inequality, we slightly modify \cite[(3.6.1)]{AH} to get two constants $a>1$ and $b>0$ such that for any $\lambda>0$ and $0<\varepsilon\leq 1$, one has the following good-$\lambda$ inequality
 \begin{align}\label{2.4}
 |\{x\in\mathbb R^n:\ R^\ast_{\alpha}\mu(x)>a\lambda\}|&\leq b\varepsilon^{\frac{n+2\alpha}{n}}|\{x\in\mathbb R^n:\ R^\ast_{\alpha}\mu(x)>\lambda\}|\nonumber\\
 &\ \ +|\{x\in\mathbb R^n:\ M_{\alpha}\mu(x)>\varepsilon\lambda\}|.
 \end{align}
 Inspired by \cite[Theorem 3.6.1]{AH}, we proceed the proof by using $(\ref{2.4})$.
 Multiplying $(\ref{2.4})$ by $\lambda^{p-1}$ and integrating in $\lambda$, we have for any $\gamma>0$,
 \begin{align*}
 \int_{0}^{\gamma}|\{x\in\mathbb R^n:\ R^\ast_{\alpha}\mu(x)>a\lambda\}|\lambda^{p-1}d\lambda&\leq b\varepsilon^{\frac{n+2\alpha}{n}}\int_{0}^{\gamma}|\{x\in\mathbb R^n:\ R^\ast_{\alpha}\mu(x)>\lambda\}|\lambda^{p-1}d\lambda\\
 &\quad+\int_{0}^{\gamma}|\{x\in\mathbb R^n:\  M_{\alpha}\mu(x)>\varepsilon\lambda\}|\lambda^{p-1}d\lambda.
 \end{align*}
 An equivalent formulation of the above inequality is
 \begin{align*}
 a^{-p}\int_{0}^{a\gamma}|\{x\in\mathbb R^n:\ R^\ast_{\alpha}\mu(x)>a\lambda\}|\lambda^{p-1}d\lambda&\leq b\varepsilon^{\frac{n+2\alpha}{n}}\int_{0}^{\gamma}|\{x\in\mathbb R^n:\ R^\ast_{\alpha}\mu(x)>\lambda\}|\lambda^{p-1}d\lambda\\
 &\,\,\,\,+\varepsilon^{-p}\int_{0}^{\varepsilon \gamma}|\{x\in\mathbb R^n:\ M_{\alpha}\mu(x)>\varepsilon\lambda\}|\lambda^{p-1}d\lambda.
 \end{align*}
 Let $\varepsilon$ be so small that $b\varepsilon^{\frac{n+2\alpha}{n}}\leq \frac{1}{2}a^{-p}$ and $\gamma\to\infty$. Then
 $$
 a^{-p}\int_{\mathbb{R}^{n}}(R^\ast_{\alpha}\mu(x))^{p}dx\leq 2\varepsilon^{-p}\int_{\mathbb{R}^{n}}(M_{\alpha}\mu(x))^{p}dx.
 $$
 That is
 $$
 \|M_{\alpha}\mu\|_{L^{p}(\mathbb{R}^{n})}\gtrsim \|R^\ast_{\alpha}\mu\|_{L^{p}(\mathbb{R}^{n})}.
 $$
 \end{proof}

 The third is about the Hedberg-Wolff potential for $R_\alpha$:
  $$
 P_{\alpha p}^{R}\mu(t,x)=\int_{0}^{\infty}\left(\frac{\mu(B^{(\alpha)}_{r}(t, x))}{r^{n}}\right)^{p'-1}\frac{dr}{r}\,\,\,\,\forall\,\,(t,x)\in \mathbb{R}_{+}^{1+n}.
 $$

\begin{lemma}\label{l22}  Let $1<p<\infty$, $p'=\frac{p}{p-1}$, and $\mu$ be a nonnegative Radon measure on $\mathbb R^{1+n}_+$. Then
$$
\|R_{\alpha}^{*}\mu\|_{L^{p'}(\mathbb{R}^{n})}^{p'}\approx \int_{\mathbb{R}_{+}^{1+n}}P_{\alpha p}^{R}\mu\, d\mu.
$$
\end{lemma}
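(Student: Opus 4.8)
The plan is to establish the two-sided bound by unwinding the definition of $R_\alpha^\ast\mu$ and using the kernel estimate $K_t^{(\alpha)}(x)\approx t\big(t^{1/2\alpha}+|x|\big)^{-(n+2\alpha)}$. Writing $p'=p/(p-1)$, we have
$$
\|R_\alpha^\ast\mu\|_{L^{p'}(\mathbb R^n)}^{p'}=\int_{\mathbb R^n}\left(\int_{\mathbb R^{1+n}_+}K_t^{(\alpha)}(x-y)\,d\mu(t,y)\right)^{p'}dx,
$$
so everything reduces to comparing the inner potential $R_\alpha^\ast\mu(x)$ with a suitable dyadic sum, and then comparing the resulting $L^{p'}$ norm with $\int_{\mathbb R^{1+n}_+}P_{\alpha p}^R\mu\,d\mu$. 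The first step I would carry out is the \emph{discretization of the kernel}: for fixed $x$, split the integration over $(t,y)\in\mathbb R^{1+n}_+$ according to the parabolic ``annuli'' determined by $t^{1/2\alpha}+|x-y|\approx 2^k$, $k\in\mathbb Z$. On such an annulus $K_t^{(\alpha)}(x-y)\approx t\,2^{-k(n+2\alpha)}$, and since $t\le 2^{2\alpha k}$ there, one obtains
$$
R_\alpha^\ast\mu(x)\approx\sum_{k\in\mathbb Z}2^{-k(n+2\alpha)}\int_{\{t^{1/2\alpha}+|x-y|\le 2^k\}}t\,d\mu(t,y),
$$
up to harmless constants; the upper bound here is immediate from the kernel estimate, and the lower bound comes from keeping just the single annulus achieving the maximum (or, more robustly, integrating the lower kernel bound over the region $t^{1/2\alpha}+|x-y|\le 2^k$ and $t\approx 2^{2\alpha k}$).

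The second step is to recognize the discretized quantity as a \emph{Wolff-type potential of parabolic balls}. The parabolic ball $B^{(\alpha)}_r(t,x)$ as defined in the excerpt has the ``radius-$r$, time-lag $\approx r^{2\alpha}$'' shape, and $\mu(B^{(\alpha)}_r(t,x))/r^n$ is exactly the quantity appearing in $P_{\alpha p}^R\mu$ and in $M_\alpha\mu$. I would relate $\int_{\{t^{1/2\alpha}+|x-y|\le 2^k\}}t\,d\mu(t,y)$ to $2^{2\alpha k}\mu(B)$ for the appropriate ball(s) by a further dyadic decomposition in the $t$-variable; the factor $t\approx 2^{2\alpha k}$ combines with $2^{-k(n+2\alpha)}$ to give $2^{-kn}$, so that
$$
R_\alpha^\ast\mu(x)\approx\sum_{k\in\mathbb Z}\frac{\mu\big(\text{parabolic ball of radius }\approx 2^k\text{ ``above'' }x\big)}{2^{kn}}.
$$

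The third step is the \emph{passage from the pointwise discrete potential to the integrated Wolff potential}, which is the standard Hedberg--Wolff duality and is where the real work lies. One direction — bounding $\int P_{\alpha p}^R\mu\,d\mu$ by $\|R_\alpha^\ast\mu\|_{p'}^{p'}$ — follows by a now-classical argument: expand $P_{\alpha p}^R\mu(t,x)=\sum_k\big(\mu(B^{(\alpha)}_{2^k}(t,x))/2^{kn}\big)^{p'-1}$, integrate $d\mu(t,x)$, use Fubini to move one factor of $\mu(B)$ onto the variable of integration, and then recognize $\sum_k 2^{-kn(p'-1)}\mu(B)^{p'-1}\mathbf 1_{B}$ as dominated by $\big(R_\alpha^\ast\mu\big)^{p'-1}$ pointwise (after checking the parabolic balls are nested and comparable), followed by H\"older; this is the route taken in \cite[Theorem 3.6.1]{AH} and its proof via the good-$\lambda$ inequality of Lemma \ref{l21} can be substituted here if a cleaner path is wanted. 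The reverse inequality $\|R_\alpha^\ast\mu\|_{p'}^{p'}\lesssim\int P_{\alpha p}^R\mu\,d\mu$ is obtained by the dual Wolff inequality: using the discretization of Step 1–2, $\|R_\alpha^\ast\mu\|_{p'}^{p'}=\int_{\mathbb R^n}\big(\sum_k a_k(x)\big)^{p'}dx$ with $a_k(x)=2^{-kn}\mu(B^{(\alpha)}_{2^k}(\cdot,x))$, and one controls this sum of dyadic pieces by a ``linearization'' trick — choosing for each $x$ the optimal scale, then summing the tail geometrically — reducing to $\int_{\mathbb R^{1+n}_+}\big(M_\alpha\mu\big)^{p'-1}d\mu$-type expressions, which telescope into the Wolff potential. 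The main obstacle I anticipate is precisely the bookkeeping in this last step: ensuring the parabolic balls $B^{(\alpha)}_r(t,x)$ — which are \emph{offset} in time rather than centered — still satisfy enough of a doubling/nesting property that the standard Euclidean Wolff-potential machinery of \cite{AH, HW} transfers without loss, and tracking the interplay of the exponents $n$, $2\alpha$, and $p'$ through the dyadic sums. Once the discretization is set up correctly this is routine, but it is the place where the parabolic (as opposed to isotropic) geometry must be handled with care.
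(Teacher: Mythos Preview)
Your outline contains the right ingredients---dyadic decomposition of the kernel, identification with parabolic Wolff potentials, and the Hedberg--Wolff duality---and in that sense follows the same route as the paper. However, you have the two directions and their associated tools swapped, and this is not merely cosmetic.

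The direction $\int_{\mathbb R^{1+n}_+}P_{\alpha p}^R\mu\,d\mu\lesssim\|R_\alpha^\ast\mu\|_{L^{p'}}^{p'}$ is the \emph{softer} one: the paper writes $\|R_\alpha^\ast\mu\|_{p'}^{p'}=\int_{\mathbb R^{1+n}_+}\big(\int_{\mathbb R^n}(R_\alpha^\ast\mu(x))^{p'-1}K_t^{(\alpha)}(x-y)\,dx\big)\,d\mu(t,y)$ and lower-bounds the inner integral pointwise by $P_{\alpha p}^R\mu(t,y)$ using only the kernel estimate and a dyadic sum. No good-$\lambda$ is needed here, and your ``expand, Fubini, dominate by $(R_\alpha^\ast\mu)^{p'-1}$, H\"older'' sketch is essentially this same argument.

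The direction $\|R_\alpha^\ast\mu\|_{L^{p'}}^{p'}\lesssim\int_{\mathbb R^{1+n}_+}P_{\alpha p}^R\mu\,d\mu$ is where the real difficulty lies, and it is \emph{here} that the paper invokes Lemma~\ref{l21}: the good-$\lambda$ inequality gives $\|R_\alpha^\ast\mu\|_{L^{p'}}\lesssim\|M_\alpha\mu\|_{L^{p'}}$, after which a Fubini argument handles $\|M_\alpha\mu\|_{L^{p'}}^{p'}$. Your proposed ``linearization trick---choosing for each $x$ the optimal scale, then summing the tail geometrically'' does not suffice for this direction: $R_\alpha^\ast\mu(x)$ is a full sum $\sum_k a_k(x)$ over scales, and $(\sum_k a_k)^{p'}$ with $p'>1$ can be much larger than $\sum_k a_k^{p'}$; picking a single optimal scale only recovers the maximal function $M_\alpha\mu$, which is exactly why one needs the good-$\lambda$ step to pass from $R_\alpha^\ast\mu$ to $M_\alpha\mu$ in $L^{p'}$. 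So the good-$\lambda$ input you relegate to an optional alternative for the easy direction is in fact the essential ingredient for the hard one. Once you reassign the tools correctly, your plan and the paper's proof coincide.
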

\begin{proof} Below is a two-fold argument.

{\it Part 1}. The first task is to show
$$
\|R^\ast_{\alpha}\mu\|_{L^{p'}(\mathbb{R}^{n})}^{p'}\lesssim \int_{\mathbb{R}_{+}^{1+n}}P_{\alpha p}^{R}\mu\,d\mu.
$$
Note first that
$$
\frac{\mu(B^{(\alpha)}_{r}(r^{2\alpha}, x))}{r^{n}}\approx \left(\int_{r}^{2r}\left(\frac{\mu(B^{(\alpha)}_{s}(s^{2\alpha}, x))}{s^{n}}\right)^{p'}\frac{ds}{s}\right)^{\frac{1}{p'}}\lesssim \left(\int_{0}^{\infty}\left(\frac{\mu(B^{(\alpha)}_{s}(s^{2\alpha}, x))}{s^{n}}\right)^{p'}\frac{ds}{s}\right)^{\frac{1}{p'}}.
$$
Therefore, one has
$$
M_{\alpha}\mu(x)\lesssim \left(\int_{0}^{\infty}\left(\frac{\mu(B^{(\alpha)}_{s}(s^{2\alpha}, x))}{s^{n}}\right)^{p'}\frac{ds}{s}\right)^{\frac{1}{p'}}.
$$
By Lemma \ref{l21}, it is sufficient to verify
$$
\int_{\mathbb{R}^{n}}\int_{0}^{\infty}\left(\frac{\mu(B^{(\alpha)}_{r}(r^{2\alpha}, x))}{r^{n}}\right)^{p'}\frac{dr}{r}dx\lesssim \int_{\mathbb{R}_{+}^{1+n}}\int_{0}^{\infty}\left(\frac{\mu(B^{(\alpha)}_{r}(t, x))}{r^{n}}\right)^{p'-1}\frac{dr}{r}d\mu.
$$
Using the Fubini theorem, one has
$$
\int_{\mathbb{R}^{n}}\int_{0}^{\infty}\left(\frac{\mu(B^{(\alpha)}_{r}(r^{2\alpha}, x))}{r^{n}}\right)^{p'}\frac{dr}{r}dx=\int_{0}^{\infty}\int_{\mathbb{R}^{n}}\frac{\mu(B^{(\alpha)}_{r}(r^{2\alpha}, x))^{p'}}{r^{np'}}dx\frac{dr}{r}.
$$
A further application of Fubini's theorem yields
\begin{align*}
\int_{\mathbb{R}^{n}}\mu(B^{(\alpha)}_{r}(r^{2\alpha}, x))^{p'}dx&\lesssim\int_{B^{(\alpha)}_{r}(r^{2\alpha}, x)}\int_{\mathbb{R}^{n}}\mu(B^{(\alpha)}_{r}(r^{2\alpha}, x))^{p'-1}dxd\mu\\
 &\lesssim \int_{B^{(\alpha)}_{r}(r^{2\alpha}, x)}\int_{\mathbb{R}^{n}}\mu(B^{(\alpha)}_{r}(r^{2\alpha}, y))^{p'-1}dxd\mu\\
 &\lesssim r^{n}\int_{B^{(\alpha)}_{r}(r^{2\alpha}, x)}\mu(B^{(\alpha)}_{r}(r^{2\alpha}, y))^{p'-1}d\mu.
\end{align*}
Therefore,
\begin{align*}
\int_{0}^{\infty}\int_{\mathbb{R}^{n}}\frac{\mu(B^{(\alpha)}_{r}(r^{2\alpha}, x))^{p'}}{r^{np'}}dx\frac{dr}{r}&\approx
\int_{0}^{\infty}\int_{B^{(\alpha)}_{r}(r^{2\alpha}, x)}\frac{\mu(B^{(\alpha)}_{r}(r^{2\alpha}, y))^{p'-1}}{r^{n(p'-1)}}d\mu\frac{dr}{r}\\
&\approx \int_{B^{(\alpha)}_{r}(r^{2\alpha}, x)}\int_{0}^{\infty}\left(\frac{\mu(B^{(\alpha)}_{r}(r^{2\alpha}, y))}{r^{n}}\right)^{p'-1}\frac{dr}{r}d\mu\\
&\lesssim \int_{\mathbb{R}_{+}^{1+n}}\left(\int_{0}^{\infty}\left(\frac{\mu(B^{(\alpha)}_{r}(t, x))}{r^{n}}\right)^{p'-1}\frac{dr}{r}\right)d\mu(t,x),
\end{align*}
as desired.

{\it Part 2.} The second task is to prove
 $$
 \|R^\ast_{\alpha}\mu\|_{L^{p'}(\mathbb{R}^{n})}^{p'}\gtrsim \int_{\mathbb{R}_{+}^{1+n}}P_{\alpha p}^{R}\mu\,d\mu.
 $$

Since
\begin{align*}
\|R^\ast_{\alpha}\mu\|_{L^{p'}(\mathbb{R}^{n}, \,dx)}^{p'}&=\int_{\mathbb{R}^{n}}\big(R^\ast_{\alpha}\mu(x)\big)^{p'-1}(R^\ast_{\alpha}\mu(x))dx\\
&=\int_{\mathbb{R}_{+}^{1+n}}\int_{\mathbb{R}^{n}}(R^\ast_{\alpha}\mu(x))^{p'-1}K_{t}^{(\alpha)}(x-y)dx\,d\mu(t,y).
\end{align*}
Upon writing
$$
K(t,x)=\int_{\mathbb{R}^{n}}(R^\ast_{\alpha}\mu(x))^{p'-1}K_{t}^{(\alpha)}(x-y)dx
$$
and
$$
B(x,2^{-m})=\{y\in\mathbb R^n:\ |x-y|<2^{-m}\ \&\ (2^{-m})^{2\alpha}<t<2(2^{-m})^{2\alpha}\}\ \forall\ m\in\mathbb Z\equiv\{0,\pm 1,\pm 2,...\},
$$
we obtain
\begin{align*}
K(t,x)&\approx\int_{\mathbb{R}^{n}}\frac{t}{(t^{\frac{1}{2\alpha}}+|x-y|)^{n+2\alpha}}\left(\int_{\mathbb{R}_{+}^{1+n}}\frac{s}{(s^{\frac{1}{2\alpha}}+|y-z|)^{n+2\alpha}}d\mu\right)^{p'-1}dy\\
&\gtrsim \sum_{m\in \mathbb{Z}}\int_{B(x,2^{-m})}t^{-\frac{n}{2\alpha}}
\left(\int_{B^{(\alpha)}_{2^{-m}}(t, x)}s^{-\frac{n}{2\alpha}}d\mu\right)^{p'-1}dy\\
&\gtrsim \sum_{m\in \mathbb{Z}}\int_{B(x,2^{-m})}{2^{mn}}
\left(\frac{\mu(B^{(\alpha)}_{2^{-m}}(t, x))}{2^{-m}}\right)^{p'-1}dy\\
&\gtrsim \int_{0}^{\infty}\frac{1}{r^{n}}\int_{B(x,2^{-m})}{2^{mn}}
\left(\frac{\mu(B^{(\alpha)}_{2^{-m}}(t, x))}{2^{-m}}\right)^{p'-1}dy\frac{dr}{r}\\
&\gtrsim \int_{0}^{\infty}
\left(\frac{\mu(B^{(\alpha)}_{r}(t, x))}{r^{n}}\right)^{p'-1}\frac{dr}{r},
\end{align*}
thereby reaching the required inequality.

\end{proof}

Now, Theorem \ref{MT} with $T_\alpha=R_\alpha$ is contained in the following result.

\begin{theorem}
\label{t21} For a nonnegative Radon measure $\mu$ on $\mathbb{R}_{+}^{1+n}$ and $\lambda>0$ set
$$
C_{R}(\mu;\lambda)=\inf\Big\{C_{p}^{(R_\alpha)}(K): \ \hbox{compact}\,\, K\subset \mathbb{R}_{+}^{1+n}\,\,\&\,\,\mu(K)\geq \lambda\Big\}.
$$
\begin{enumerate}

\item If $1<p<q<\infty$ then
$$
\eqref{eR}\Leftrightarrow\sup_{\lambda>0}\frac{\lambda^{\frac{p}{q}}}{C_{R}(\mu;\lambda)}<\infty\Leftrightarrow\sup_{(r,t_0,x_0)\in\mathbb R_+\times\mathbb R_+\times\mathbb R^n}\frac{\mu(B^{(\alpha)}_{r}(t_{0}, x_{0}))}{r^\frac{nq}{p}}<\infty.
$$

\item If $1<p=q<\infty$ then
$$
\eqref{eR}\Leftrightarrow\sup_{\lambda>0}\frac{\lambda}{C_{R}(\mu;\lambda)}<\infty\quad\left(\Rightarrow\sup_{(r,t_0,x_0)\in\mathbb R_+\times\mathbb R_+\times\mathbb R^n}\frac{\mu(B^{(\alpha)}_{r}(t_{0}, x_{0}))}{r^n}<\infty\right).
$$

\item $1<q<p<\infty$ then
$$
\eqref{eR}\Leftrightarrow\int_{0}^{\infty}\left(\frac{\lambda^{\frac{p}{q}}}{C_{R}(\mu;\lambda)}\right)^{\frac{q}{p-q}}\frac{d\lambda}{\lambda}<\infty\Leftrightarrow P_{\alpha p}^{R}\mu \in L_\mu^{q(p-1)/(p-q)}(\mathbb{R}_{+}^{1+n}).
$$
\end{enumerate}
\end{theorem}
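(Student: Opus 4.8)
The plan is to recast the trace inequality \eqref{eR} as a weighted norm inequality for the adjoint operator $R_\alpha^\ast$ via duality, then exploit Lemmas \ref{l20}--\ref{l22} to pass between the operator $R_\alpha^\ast$, the fractional parabolic maximal function $M_\alpha$, the capacity $C_p^{(R_\alpha)}$, and the Hedberg--Wolff potential $P_{\alpha p}^R\mu$. By $L^p$--$L^q$ duality, \eqref{eR} is equivalent to the estimate $\|R_\alpha^\ast(\phi\,d\mu)\|_{L^{p'}(\mathbb R^n)}\lesssim \|\phi\|_{L^{q'}_\mu}$ for all nonnegative $\phi\in L^{q'}_\mu$; equivalently, testing against $\phi=\mathbf 1_K$, it is comparable to a comparison between $\mu(K)$ and $C_p^{(R_\alpha)}(K)$ through the capacitary strong-type machinery. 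The three cases $p<q$, $p=q$, $p>q$ then correspond exactly to the three classical regimes of Maz'ya-type capacitary inequalities.

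First I would handle the equivalence with the $C_R(\mu;\lambda)$ conditions. For $p\le q$: the implication \eqref{eR}$\Rightarrow$ (capacity condition) follows by testing \eqref{eR} on the extremal (or near-extremal) $f\ge 0$ with $R_\alpha f\ge\mathbf 1_K$ from the definition of $C_p^{(R_\alpha)}(K)$, giving $\mu(K)^{1/q}\le\|f\|_{L^p}\approx C_p^{(R_\alpha)}(K)^{1/p}$ and hence $\lambda^{p/q}\lesssim C_R(\mu;\lambda)$. The converse direction, and all of the $p>q$ case, is the capacitary strong-type inequality: one slices $\{|R_\alpha f|>\lambda\}$ dyadically, uses that this superlevel set is (up to constants) captured by a set of capacity $\lesssim\lambda^{-p}\|f\|_{L^p}^p$ summed over dyadic levels, and then invokes the hypothesis $\mu\le$ (const)$\cdot$ (function of $C_p^{(R_\alpha)}$) on each piece. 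This is the standard Adams--Maz'ya--Dahlberg descent; for $p=q$ it is linear in $\lambda$, for $p>q$ one inserts Hölder with exponents $p/q$ and $p/(p-q)$ on the dyadic sum to produce the $\ell^{q/(p-q)}$ (integral) condition $\int_0^\infty(\lambda^{p/q}/C_R(\mu;\lambda))^{q/(p-q)}\,d\lambda/\lambda<\infty$, and for $p<q$ it collapses to the supremum bound.

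Next I would reduce the capacity conditions to the concrete ball/potential conditions. For $p<q$, I would invoke Lemma \ref{l20} together with \eqref{capBall} (which gives $C_p^{(R_\alpha)}(B_r^{(\alpha)})\approx r^n$) to show $C_R(\mu;\lambda)\approx\inf\{r^n:\mu(B_r^{(\alpha)})\ge\lambda\}$ up to a covering argument: any compact $K$ with $\mu(K)\ge\lambda$ can be covered efficiently by parabolic balls, and subadditivity of capacity plus the reverse (a single ball) pins down $C_R(\mu;\lambda)$, yielding $\sup_\lambda\lambda^{p/q}/C_R(\mu;\lambda)<\infty\Leftrightarrow\sup\mu(B_r^{(\alpha)})/r^{nq/p}<\infty$. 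For $p>q$, the bridge to $P_{\alpha p}^R\mu\in L^{q(p-1)/(p-q)}_\mu$ is the Wolff-potential characterization: by Lemma \ref{l22}, $\|R_\alpha^\ast\mu_E\|_{L^{p'}}^{p'}\approx\int_E P_{\alpha p}^R\mu\,d\mu$ for restrictions $\mu_E$, and combined with Lemma \ref{l20} this identifies the capacity condition with a Wolff-potential integrability statement; the passage from the $C_R(\mu;\lambda)$ integral to $\int P_{\alpha p}^R\mu\,d\mu$ is then a Maz'ya-type equivalence (the same one underlying the classical nonlinear potential theory of Riesz capacities, adapted to the parabolic scaling via the estimate $K_t^{(\alpha)}(x)\approx t(t^{1/2\alpha}+|x|)^{-(n+2\alpha)}$).

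The main obstacle I expect is the capacitary strong-type inequality in the $p>q$ regime: obtaining the sharp dyadic decomposition of superlevel sets of $R_\alpha f$ with capacity control and then summing with the correct Hölder exponents requires care that the constants do not degenerate, and that the "parabolic" nature of the balls $B_r^{(\alpha)}(t_0,x_0)$ (the offset $t-t_0\in(r^{2\alpha},2r^{2\alpha})$ rather than a symmetric ball) is compatible with the covering lemmas; one must check that a Besicovitch/Vitali-type selection works for this anisotropic family, which is where the kernel estimate and the translation structure in $x$ (but only one-sided structure in $t$) must be used carefully. A secondary technical point is verifying the good-$\lambda$ inequality \eqref{2.4} at the level needed here, but since Lemma \ref{l21} already supplies $\|M_\alpha\mu\|_{L^p}\approx\|R_\alpha^\ast\mu\|_{L^p}$, the maximal-function route largely circumvents that difficulty and lets the proof proceed along the $M_\alpha$/Wolff-potential axis rather than directly through $R_\alpha^\ast$.
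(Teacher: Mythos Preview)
Your framework for parts (1), (2), and the left equivalence of (3) is sound and matches what the paper does (it simply cites \cite{CX} for these, whose arguments rest on Lemma~\ref{l20} and the capacitary strong-type inequality exactly as you outline). The gap is in your bridge to the Wolff potential for $p>q$. Your claim that Lemma~\ref{l22} gives $\|R_\alpha^\ast\mu_E\|_{L^{p'}}^{p'}\approx\int_E P_{\alpha p}^R\mu\,d\mu$ for restrictions $\mu_E=\mu|_E$ is not what the lemma says: applied to $\mu_E$ it yields $\int_E P_{\alpha p}^R\mu_E\,d\mu$, which involves $\mu(B_r^{(\alpha)}\cap E)$ rather than $\mu(B_r^{(\alpha)})$, and these are not comparable in general. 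Consequently the ``Maz'ya-type equivalence'' you invoke between the $C_R(\mu;\lambda)$ integral and $\int (P_{\alpha p}^R\mu)^{q(p-1)/(p-q)}\,d\mu$ is not a known direct step and your outline does not indicate how to close it.

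The paper bypasses this issue entirely: it proves \eqref{eR}$\Leftrightarrow P_{\alpha p}^R\mu\in L_\mu^{q(p-1)/(p-q)}$ \emph{directly}, following the Cascante--Ortega--Verbitsky scheme \cite{COV}. For the forward direction it dualizes \eqref{eR} to $\|R_\alpha^\ast(\mathbf g\,d\mu)\|_{L^{p'}}\lesssim\|\mathbf g\|_{L^{q'}_\mu}$, passes to a \emph{dyadic} Hedberg--Wolff potential over $\alpha$-parabolic dyadic cubes, and then linearizes the resulting Carleson-type condition via the substitution $\mathbf g=(M_\mu^d h)^{1/p'}$; the non-doubling case is handled by averaging over shifts $\tau$ of the dyadic grid. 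For the reverse direction it uses the pointwise bound $P_{\alpha p}^R(\mathbf g\,d\mu)\lesssim (M_\mu\mathbf g)^{p'-1}P_{\alpha p}^R\mu$ together with H\"older and the $L^{q'}_\mu$-boundedness of $M_\mu$. None of this uses parabolic covering lemmas; the obstacle you flag (Besicovitch/Vitali selection for the one-sided balls $B_r^{(\alpha)}$) is a red herring here, since the dyadic grid absorbs the anisotropy.
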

\begin{proof} Since (1), (2) and the left equivalence of (3) are contained in \cite[Theorems 3.2\&3.3]{CX} whose proofs depend on Lemma \ref{l20}, it is enough to check the right equivalence of (3). Our approach is a fractional heat potential analogue of the Riesz potential treatment carried in \cite[Theorem 2.1]{COV}.

{\it Step 1.} We show
\begin{equation*}
\label{right} \eqref{eR}\Rightarrow P_{\alpha p}^{R}\mu \in L_\mu^{q(p-1)/(p-q)}(\mathbb{R}_{+}^{1+n}).
\end{equation*}

To do so, we first denote by $Q^{(\alpha)}_{l}$ the $\alpha$-dyadic cube with side length $l\equiv l(Q^{(\alpha)}_{l})$ and corners in the set $\{l^{2\alpha}\mathbb{Z}_{+}, l\mathbb{Z}^{n}\}$ with $\mathbb{Z}_{+}=\{0,1,2,\cdots\}$ - namely -
$$
Q^{(\alpha)}_{l}\equiv\{[k_{0}l^{2\alpha}, (k_{0}+1)l^{2\alpha})\times [k_{1}l, (k_{1}+1)l)\times\cdots\times[k_{n}l, (k_{n}+1)l)\} \,\,\,\,\hbox{as}\,\,\,\,k_{0}\in \mathbb{Z}_{+}\,\,\&\,\, k_{i}\in \mathbb{Z} 
$$
for $i=1,2,\cdots,n.$
Next, we introduce the following fractional heat Hedberg-Wolff potential generated by $\mathcal{D}^{\alpha}$ - the family of all the above-defined $\alpha$-dyadic cubes in $\mathbb R^{1+n}_+$:
$$
P_{\alpha p}^{d,R}\mu(t,x)=\sum_{Q^{(\alpha)}_{l}\in\mathcal{D}^{\alpha}}\left(\frac{\mu(Q^{(\alpha)}_{l})}{l^{n}}\right)^{p'-1}\textbf{1}_{Q^{(\alpha)}_{l}}(t,x)
$$
and then prove

\begin{equation}\label{2.5}
\eqref{eR}\Rightarrow\int_{\mathbb{R}_{+}^{1+n}}(P_{\alpha p}^{d,R}\mu(t,x))^{\frac{q(p-1)}{(p-q)}}d\mu(t,x)<\infty.
\end{equation}

Indeed, by duality, $(\ref{eR})$ is equivalent to the following inequality
$$
\|R_{\alpha}^{*}(\mathbf{g}d\mu)\|_{L^{p'}(\mathbb{R}^{n})}^{p'}\lesssim \|\mathbf{g}\|_{L_\mu^{q'}(\mathbb{R}_{+}^{1+n})}^{p'}\,\,\,\,\forall \,\,\mathbf{g}\in L_\mu^{q'=\frac{q}{q-1}}(\mathbb{R}_{+}^{1+n}).
$$
It is easy to check that Lemma \ref{l22} is also true with $P_{\alpha p}^{d,R}\mu$ in place of $P_{\alpha p}^{R}\mu$ and $\mathbf{g}d\mu$ in place of $d\mu$. So, one has
$$
\|R_{\alpha}^{*}(\mathbf{g}d\mu)\|_{L^{p'}(\mathbb{R}^{n})}^{p'}\gtrsim \int_{\mathbb{R}_{+}^{1+n}}P_{\alpha p}^{d,R}(\mathbf{g}d\mu)(t,x)\mathbf{g}(t,x)d\mu(t,x)\gtrsim \sum_{Q^{(\alpha)}_{l}}\left(\frac{\int_{Q^{(\alpha)}_{l}}\mathbf{g}(t,x)d\mu(t,x)}{l^{n}}\right)^{p'}l^{n}.
$$
Consequently,
\begin{equation}\label{2.6}
\sum_{Q^{(\alpha)}_{l}}\left(\frac{\int_{Q^{(\alpha)}_{l}}\mathbf{g}(t,x)d\mu(t,x)}{l^{n}}\right)^{p'}l^{n}\lesssim \|\mathbf{g}\|_{L_\mu^{q'}(\mathbb{R}_{+}^{1+n})}^{p'}.
\end{equation}
Upon setting
$$
\lambda_{Q^{(\alpha)}_{l}}=\left(\frac{\mu(Q^{(\alpha)}_{l})}{l^{n}}\right)^{p'}l^{n},
$$
one finds that $(\ref{2.6})$ is equivalent to
$$
\sum_{Q^{(\alpha)}_{l}}\lambda_{Q^{(\alpha)}_{l}}\left(\frac{\int_{Q^{(\alpha)}_{l}}\mathbf{g}\,d\mu}{\mu(Q^{(\alpha)}_{l})}\right)^{p'}\lesssim \|\mathbf{g}\|_{L_\mu^{q'}(\mathbb{R}_{+}^{1+n})}^{p'}.
$$
Define the following dyadic Hardy-Littlewood maximal function
$$
M_{\mu}^{d}h(t,x)=\sup_{(t,x)\in Q^{(\alpha)}}\frac{1}{\mu(Q^{(\alpha)})}\int_{Q^{(\alpha)}}|h(s,y)|d\mu(s,y)\,\,\,\,\forall\,\,Q^{(\alpha)}\in \mathcal{D}^{\alpha}.
$$
Then $M_{\mu}^{d}$ is bounded on $L^{p}_\mu(\mathbb{R}_{+}^{1+n})$ for $1<p<\infty$. Write
$$
\mathbf{g}(t,x)=(M_{\mu}^{d}h)^{\frac{1}{p'}}(t,x)
\ \hbox{under}\ 0\le h\in L_\mu^{{q'}/{p'}}(\mathbb{R}_{+}^{1+n}).
$$
It is easy to check that
$$
\left(\frac{\int_{Q^{(\alpha)}_{l}}\mathbf{g}(t,x)d\mu(t,x)}{\mu(Q^{(\alpha)}_{l})}\right)^{p'}\gtrsim \frac{\int_{Q}h(t,x)d\mu(t,x)}{\mu(Q^{(\alpha)}_{l})}
$$
and so that
$$
\|\mathbf{g}\|_{L_\mu^{q'}(\mathbb{R}_{+}^{1+n})}^{p'}\lesssim \|h\|_{L_\mu^{{q'}/{p'}}(\mathbb{R}_{+}^{1+n})}.
$$
This in turn implies
$$
\sum_{Q^{(\alpha)}_{l}}\lambda_{Q^{(\alpha)}_{l}}\frac{\int_{Q^{(\alpha)}_{l}}h(t,x)d\mu(t,x)}{\mu(Q^{(\alpha)}_{l})}\lesssim \|h\|_{L_\mu^{{q'}/{p'}}(\mathbb{R}_{+}^{1+n})}
\ \forall\ h\in L_\mu^{{q'}/{p'}}(\mathbb{R}_{+}^{1+n}),
$$
and thus via duality
$$
\sum_{Q^{(\alpha)}_{l}}\frac{\lambda_{Q^{(\alpha)}_{l}}}{\mu(Q^{(\alpha)}_{l})}\textbf{1}_{Q^{(\alpha)}_{l}}\in L_\mu^{\frac{q'}{q'-p'}}(\mathbb{R}_{+}^{1+n}),
$$
namely,
$$
\sum_{Q^{(\alpha)}_{l}}\left(\frac{\mu(Q^{(\alpha)}_{l})}{l^{n}}\right)^{p'-1}\textbf{1}_{Q^{(\alpha)}_{l}}\in L_\mu^{\frac{q(p-1)}{p-q}}(\mathbb{R}_{+}^{1+n}),
$$
which yields \eqref{2.5}.

Next, set
$$
P_{\alpha p}^{d,\tau,R}\mu(t,x)=\sum_{Q^{(\alpha)}_{l}\in\mathcal{D}^{\alpha}_{\tau}}\left(\frac{\mu(Q^{(\alpha)}_{l})}{l^{n}}\right)^{p'-1}\textbf{1}_{Q^{(\alpha)}_{l}}(t,x)\ \ \&\ \ \mathcal{D}^{\alpha}_{\tau}=\mathcal{D}^{\alpha}+\tau=\{Q^{(\alpha)'}_{l}+\tau\}_{Q^{(\alpha)'}_{l}\in \mathcal{D}^{\alpha}},
$$
where $Q^{(\alpha)}_{l}+\tau=\{(t,x)+\tau:\ (t,x)\in Q^{(\alpha)}_{l}\}$ means the $\mathbb{R}^{1+n}_+\ni\tau$-shift of $Q^{(\alpha)}_{l}$. Then \eqref{2.5} implies
\begin{equation}\label{2.7}
\sup_{\tau\in \mathbb{R}_{+}^{1+n}}\int_{\mathbb{R}_{+}^{1+n}}\big(P_{\alpha p}^{d,\tau,R}\mu(t,x)\big)^{\frac{q(p-1)}{(p-q)}}d\mu(t,x)<\infty.
\end{equation}
Now, it remains to prove
$$
P_{\alpha p}^{R}\mu \in L_\mu^{q(p-1)/(p-q)}(\mathbb{R}_{+}^{1+n}).
$$
Two situations are considered in the sequel.

{\it Case 1.1. $\mu$ is a doubling measure.} In this case, $P_{\alpha p}^{R}\mu \in L_\mu^{q(p-1)/(p-q)}(\mathbb{R}_{+}^{1+n})$ is a by-product of $(\ref{2.5})$ and the following observation
$$
P_{\alpha p}^{R}\mu(t,x)\lesssim \sum_{Q^{(\alpha)}_{l}}\left(\frac{\mu(Q^{(\alpha)*}_{l})}{l^{n}}\right)^{p'-1}\textbf{1}_{Q^{(\alpha)}_{l}}(t,x),
$$
where $Q^{(\alpha)*}_{l}$ is the cube with the same center as $Q^{(\alpha)}_{l}$ and side length two times as $Q^{(\alpha)}_{l}$.

{\it Case 1.2. $\mu$ is a possibly non-doubling measure.} For any $\rho>0$, write
$$
P_{\alpha p,\rho}^{R}\mu(t,x)=\int_{0}^{\rho}\left(\frac{\mu(B_{r}^{(\alpha)}(t,x))}{r^{n}}\right)^{p'-1}\frac{dr}{r}.
$$
Then
\begin{equation*}\label{2.8}
P_{\alpha p,\rho}^{R}\mu(t,x)\lesssim \rho^{-(n+1)}\int_{|\tau|\lesssim\rho}P_{\alpha p}^{d,\tau,R}\mu(t,x)d\tau.
\end{equation*}
In fact, for a fixed $x\in \mathbb{R}^{n}$ and $\rho>0$ with $2^{i-1}\eta\leq \rho< 2^{i}\eta$ (where $i\in\mathbb{Z}$ and $\eta>0$ will be determined later) one has
$$
P_{\alpha p,\rho}^{R}\mu(t,x)\lesssim \sum_{j=-\infty}^{i}\left(\frac{\mu(B_{2^{j}\eta}^{(\alpha)}(t,x))}{(2^{j}\eta)^{n}}\right)^{p'-1}.
$$
For $j\leq i$, let $Q^{(\alpha)}_{l,j}$ be a cube centred at $x$ with $2^{j-1}<l\leq2^{j}$. Then $B_{2^{j}\eta}^{(\alpha)}(t,x)\subseteq Q^{(\alpha)}_{l,j}$ for sufficiently small $\eta$. Assume not only that $E$ is the set of all points $\tau\in \mathbb{R}_{+}^{1+n}$ enjoying $|\tau|\lesssim\rho$ with $|E|$ being the $(1+n)$-dimensional Lebesgue measure, but also that there exists $Q^{(\alpha),\tau}_{l}\in \mathcal{D}^{\alpha}_{\tau}$ satisfying $l=2^{j+1}$ and $Q^{(\alpha)}_{l,j}\subseteq Q^{(\alpha),\tau}_{l}$. A geometric consideration produces a dimensional constant $c(n)>0$ such that $|E|\ge c(n)\rho^{n+1}\ \forall\ j\leq i$. Consequently, one has
\begin{align*}
\mu(B_{2^{j}\eta}^{(\alpha)}(t,x))^{p'-1}&\lesssim |E|^{-1}\int_{E}\sum_{l=2^{j+1}}\mu(Q^{(\alpha),\tau}_{l})^{p'-1}\textbf{1}_{Q^{(\alpha),\tau}_{l}}(t,x)d\tau\\
&\lesssim \rho^{-(n+1)}\int_{|\tau|\lesssim \rho}\sum_{l=2^{j+1}}\mu(Q^{(\alpha),\tau}_{l})^{p'-1}\textbf{1}_{Q^{(\alpha),\tau}_{l}}(t,x)d\tau,
\end{align*}
and so that
\begin{align*}
P_{\alpha p,\rho}^{R}\mu(t,x)&\lesssim \rho^{-(n+1)}\int_{|\tau|\lesssim\rho} \sum_{j=-\infty}^{i}\sum_{l=2^{j+1}}\left(\frac{\mu(Q^{(\alpha),\tau}_{l})}{(2^{j}\eta)^{n}}\right)^{p'-1}\textbf{1}_{Q^{(\alpha),\tau}_{l}}(t,x)ds\\
&\lesssim \rho^{-(n+1)}\int_{|\tau|\lesssim\rho}P_{\alpha p}^{d,\tau,R}\mu(t,x)d\tau,
\end{align*}
whence reaching $(\ref{2.8})$.

From $(\ref{2.8})$, the H\"{o}lder inequality and Fubini's theorem it follows that
\begin{align*}
&\int_{\mathbb{R}_{+}^{1+n}}\Big(P_{\alpha p,\rho}^{R}\mu(t,x)\Big)^{\frac{q(p-1)}{p-q}}d\mu(t,x)\\
&\ \ \lesssim \int_{\mathbb{R}_{+}^{1+n}} \left(\rho^{-(n+1)}\left(\int_{|\tau|\leq C\rho} \Big(P_{\alpha p}^{d,\tau,R}\mu\Big)^{\frac{q(p-1)}{p-q}}d\tau\right)^{\frac{p-q}{q(p-1)}}\left(\int_{|\tau|\lesssim\rho}d\tau\right)^{1-\frac{p-q}{q(p-1)}}\right)^{\frac{q(p-1)}{p-q}}d\mu\\
&\ \ \lesssim\rho^{-(n+1)}\int_{|\tau|\lesssim\rho}\left( \int_{\mathbb{R}_{+}^{1+n}}\Big(P_{\alpha p}^{d,\tau,R}\mu\Big)^{\frac{q(p-1)}{p-q}}d\mu\right) d\tau\\
&\ \ \le \kappa(n),
\end{align*}
where the last constant $\kappa(n)$ is independent of $\rho$. This clearly produces
$$
P_{\alpha p}^{R}\mu \in L_\mu^{q(p-1)/(p-q)}(\mathbb{R}_{+}^{1+n})
$$
via letting $\rho\rightarrow \infty$ and utilizing the monotone convergence theorem.

{\it Step 2.} We prove
$$
P_{\alpha p}^{R}\mu \in L_\mu^{q(p-1)/(p-q)}(\mathbb{R}_{+}^{1+n})\Rightarrow\eqref{eR}.
$$

Recall that \eqref{eR} is equivalent to the following inequality
$$
\|R_{\alpha}^{*}(\mathbf{g}d\mu)\|_{L^{p'}(\mathbb{R}^{n})}\lesssim \|\mathbf{g}\|_{L_\mu^{q'}(\mathbb{R}_{+}^{1+n})}\,\,\,\,\forall\,\,\mathbf{g}\in L_\mu^{q'}(\mathbb{R}_{+}^{1+n}).
$$
Thus, by Lemma \ref{l22}, it is sufficient to check that $P_{\alpha p}^{R}\mu \in L_\mu^{q(p-1)/(p-q)}(\mathbb{R}_{+}^{1+n})$ implies
\begin{equation}\label{2.9}
\int_{\mathbb{R}_{+}^{1+n}}P_{\alpha p}^{R}(\mathbf{g}d\mu)(t,x)\mathbf{g}(t,x)d\mu\lesssim \|\mathbf{g}\|_{L^{q'}(\mathbb{R}_{+}^{1+n},\,d\mu)}^{p'}\,\,\,\,\forall\,\,\mathbf{g}\in L_\mu^{q'}(\mathbb{R}_{+}^{1+n}).
\end{equation}
There is no loss of generality in assuming $\mathbf{g}\geq 0$. Since
\begin{align*}
P_{\alpha p}^{R}(\mathbf{g}d\mu)(t,x)
&\approx\int_{0}^{\infty} \left(\frac{\mu(B_{r}^{(\alpha)}(t,x))}{r^{n}}\right)^{p'-1}\left(\frac{\int_{B_{r}^{(\alpha)}(t,x)}\mathbf{g}(t,x)d\mu}{\mu(B_{r}^{(\alpha)}(t,x))}\right)^{p'-1}\frac{dr}{r}\\
&\lesssim \left(M_{\mu}\mathbf{g}(t,x)\right)^{p'-1}P_{\alpha p}^{R}\mu(t,x),
\end{align*}
an application of the H\"{o}lder inequality gives
\begin{align*}
&\int_{\mathbb{R}_{+}^{1+n}}P_{\alpha p}^{R}(\mathbf{g}d\mu)(t,x)d\mu(t,x)\\
&\ \ \lesssim \int_{\mathbb{R}_{+}^{1+n}}\left(M_{\mu}\mathbf{g}(t,x)\right)^{p'-1}P_{\alpha p}^{R}\mu(t,x)\mathbf{g}(t,x)d\mu(t,x)\\
&\ \ \lesssim \left(\int_{\mathbb{R}_{+}^{1+n}}\left(M_{\mu}\mathbf{g}(t,x)\right)^{q'}d\mu(t,x)\right)^{\frac{q'}{p'-1}}\left(\int_{\mathbb{R}_{+}^{1+n}}\Big(\mathbf{g}(t,x)P_{\alpha p}^{R}\mu(t,x)\Big)^{\frac{q'}{q'-p'+1}}d\mu(t,x)\right)^{\frac{q'-p'+1}{q'}}.
\end{align*}
Here
$$
M_{\mu}\mathbf{g}(t,x)=\sup_{r>0}\frac{1}{\mu(B_{r}^{(\alpha)}(t,x))}\int_{B_{r}^{(\alpha)}(t,x)}\mathbf{g}(s,y)d\mu(s,y)
$$
is the centered Hardy-Littlewood maximal function of $\mathbf{g}$ with respect to $\mu$. The fact that $M_{\mu}$ is bounded on $L_\mu^{q'}(\mathbb{R}_{+}^{1+n})$ (cf. \cite{F}) and H\"{o}lder's inequality imply
$$
\int_{\mathbb{R}_{+}^{1+n}}P_{\alpha p}^{R}(\mathbf{g}d\mu)(t,x)d\mu(t,x)\lesssim \|\mathbf{g}\|_{L_\mu^{q'}(\mathbb{R}_{+}^{1+n})}^{p'}\left(\int_{\mathbb{R}_{+}^{1+n}}\Big(P_{\alpha p}^{R}\mu\Big)^{\frac{q(p-1)}{p-q}}d\mu\right)^{\frac{p-q}{q(p-1)}},
$$
whence \eqref{2.9}.
\end{proof}

\section{$S_{\alpha}$'s tracing}\label{section3}

In this section we verify Theorem \ref{MT} for $T_\alpha=S_\alpha$ and $1<p<1+\frac{n}{2\alpha}$. Like proving Theorem \ref{MT} for $T_\alpha=R_\alpha$, three lemmas are required in what follows.

The first is regarding the dual formulation of $C_p^{(S_\alpha)}(K)$ of a given compact set $K\subset\mathbb R^{1+n}_+$.

\begin{lemma}
\label{l31}
Let $\mu\in \mathcal{U}^{+}(K)$, $1<p<1+\frac{n}{2\alpha}$, $p'=\frac{p}{p-1}$, $S^\ast_\alpha$ be the adjoint operator of $S_\alpha$, and
$$
P^S_{\alpha p}\mu(t,x)=\int_0^\infty\left(\frac{\mu\big(B^{(\alpha)}_r(t,x)\big)}{r^{n+2\alpha(1-p)}}\right)^{p'-1}\frac{dr}{r}\quad\forall\quad (t,x)\in\mathbb R^{1+n}_+.
$$
Then:

$(a)$
$$
C_{p}^{(S_\alpha)}(K)=\sup\{\|\mu\|^{p}: \mu\in \mathcal{U}^{+}(K)\,\,\&\,\,\|S_{\alpha}^{*}\mu\|_{L^{p'}(\mathbb{R}_{+}^{1+n})}\leq 1\}.
$$

$(b)$
$$
\|S_{\alpha}^{*}\mu\|_{L^{p'}(\mathbb{R}^{1+n}_+)}^{p'}\approx \int_{\mathbb{R}_{+}^{1+n}}P_{\alpha p}^{S}\mu(t,x)d\mu(t,x).
$$
\end{lemma}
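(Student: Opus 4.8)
The plan is to mirror the three-lemma structure already used for $R_\alpha$, noticing that the only structural difference is that the adjoint $S_\alpha^\ast$ now maps $L^{p'}(\mathbb R^{1+n}_+)$ (not $L^{p'}(\mathbb R^n)$) to itself-type spaces, and that the kernel $K^{(\alpha)}_{t-s}(x-y)$ is supported on $\{s<t\}$ and integrated against $ds$. For part $(a)$, I would first compute the adjoint explicitly: from the Fubini identity
$$
\int_{\mathbb R^{1+n}_+}S_\alpha g(t,x)\,h(t,x)\,dt\,dx=\int_{\mathbb R^{1+n}_+} g(s,y)\Big(\int_s^\infty\!\int_{\mathbb R^n}K^{(\alpha)}_{t-s}(x-y)h(t,x)\,dx\,dt\Big)ds\,dy,
$$
one reads off $S_\alpha^\ast h(s,y)=\int_s^\infty\!\int_{\mathbb R^n}K^{(\alpha)}_{t-s}(x-y)h(t,x)\,dx\,dt$, and correspondingly $S_\alpha^\ast\mu(s,y)=\int_{\{t>s\}}K^{(\alpha)}_{t-s}(x-y)\,d\mu(t,x)$ after the same Riesz-representation argument as in Lemma \ref{l20} (the bound $|\int S_\alpha f\,d\mu|\lesssim\|f\|_\infty\|\mu\|$ uses $\int_{\mathbb R^n}K^{(\alpha)}_{t-s}\,dx=1$ and integrates a bounded $f$ over a compact time window determined by $\support\mu$). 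Then $(a)$ is exactly \cite[Proposition 1]{CX} applied to the operator $S_\alpha:L^p(\mathbb R^{1+n}_+)\to L^q_\mu$, i.e. the same abstract duality/minimax lemma that produced the dual form of $C_p^{(R_\alpha)}$; so $(a)$ requires essentially no new work beyond identifying $S_\alpha^\ast$.

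The substance is part $(b)$, which is the $S_\alpha$-analogue of Lemma \ref{l22}, so I would follow that two-part proof. For the lower bound $\|S_\alpha^\ast\mu\|_{L^{p'}}^{p'}\gtrsim\int P^S_{\alpha p}\mu\,d\mu$, write $\|S_\alpha^\ast\mu\|_{p'}^{p'}=\int_{\mathbb R^{1+n}_+}(S_\alpha^\ast\mu(s,y))^{p'-1}\,dS_\alpha^\ast\mu$, unfold one copy of $S_\alpha^\ast$ against $d\mu$ by Fubini, and then bound the resulting inner integral from below by restricting the region of integration to parabolic annuli $B^{(\alpha)}_{2^{-m}}(t,x)$ and using the kernel lower bound $K^{(\alpha)}_{\tau}(z)\gtrsim\tau(\tau^{1/2\alpha}+|z|)^{-(n+2\alpha)}$. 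The dyadic sum over $m$ then reconstitutes the integral $\int_0^\infty(\mu(B^{(\alpha)}_r(t,x))/r^{\,?})^{p'-1}\,dr/r$; the crucial bookkeeping point is that on a parabolic ball of radius $r$ one has time-width $\sim r^{2\alpha}$, so integrating the kernel in the extra time variable $ds$ produces an additional factor $\sim r^{2\alpha}$ compared to the $R_\alpha$ computation, and a factor $r^{-n}$ from the spatial kernel normalization; combining gives the weight $r^{n}\cdot r^{-2\alpha(p'-1)\cdot?}$ — tracking the exponents carefully yields the normalization $r^{n+2\alpha(1-p)}$ appearing in $P^S_{\alpha p}\mu$. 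For the upper bound I would introduce the $S_\alpha$-parabolic fractional maximal operator $M^S_\alpha\mu(t,x)=\sup_{r>0}r^{-(n+2\alpha(1-p))}\mu(B^{(\alpha)}_r(t,x))$ (or whatever normalization matches $C_p^{(S_\alpha)}(B^{(\alpha)}_r)\approx r^{n+2\alpha(1-p)}$ from \eqref{capBall}), prove $\|M^S_\alpha\mu\|_{L^{p'}}\approx\|S_\alpha^\ast\mu\|_{L^{p'}}$ via a good-$\lambda$ inequality exactly as in Lemma \ref{l21} — here the exponent of $\varepsilon$ in the good-$\lambda$ inequality must be chosen so that it dominates the $L^{p'}$ scaling, which is where the restriction $p<1+\frac{n}{2\alpha}$ (equivalently $n+2\alpha(1-p)>0$) is used — and then pass from $M^S_\alpha\mu$ to $P^S_{\alpha p}\mu$ by the pointwise comparison $M^S_\alpha\mu(x)\lesssim\big(\int_0^\infty(\mu(B^{(\alpha)}_s)/s^{n+2\alpha(1-p)})^{p'}\,ds/s\big)^{1/p'}$ followed by the same Fubini argument as in Part 1 of Lemma \ref{l22}.

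The main obstacle I anticipate is not conceptual but is getting the scaling exponents in $P^S_{\alpha p}\mu$ to come out exactly to $n+2\alpha(1-p)$: one must keep straight the interplay between the parabolic scaling $r\mapsto r^{2\alpha}$ in time, the extra $ds$-integration in $S_\alpha^\ast$ (absent in $R_\alpha^\ast$), and the exponent $p'-1$, and in particular verify that the condition $1<p<1+\frac{n}{2\alpha}$ is precisely what keeps $n+2\alpha(1-p)$ positive so that the maximal function $M^S_\alpha$ is well-defined and the good-$\lambda$ machinery applies. A secondary technical point is justifying the kernel lower bound $K^{(\alpha)}_{t-s}(x-y)\gtrsim (t-s)\big((t-s)^{1/2\alpha}+|x-y|\big)^{-(n+2\alpha)}$ uniformly on the relevant annuli, which is immediate from the two-sided estimate for $K^{(\alpha)}_t$ quoted in the introduction; once the exponent arithmetic is pinned down, both halves of $(b)$ are routine adaptations of Lemmas \ref{l21} and \ref{l22}.
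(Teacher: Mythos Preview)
Your plan matches the paper's proof almost exactly: part $(a)$ is the adjoint computation plus the abstract duality formula (the paper cites \cite[Proposition 2.1]{JXYZ} rather than \cite{CX}, but it is the same minimax lemma), and part $(b)$ is, as the paper itself says, ``a slight modification of the argument for Lemma \ref{l22}'' with a space--time maximal function in place of the spatial one.

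There is one concrete bookkeeping misstep, precisely at the point you flagged as the main obstacle. The intermediate maximal function should be
\[
M_\alpha\mu(t,x)=\sup_{r>0}r^{-n}\mu\big(B^{(\alpha)}_r(t,x)\big),
\]
with the \emph{same} normalization $r^{-n}$ as in the $R_\alpha$ case, not $r^{-(n+2\alpha(1-p))}$. The pointwise kernel lower bound on $B^{(\alpha)}_r(t,x)$ still gives $K^{(\alpha)}_{s-t}(y-x)\gtrsim r^{-n}$ (there is no extra $ds$-integration inside $S_\alpha^\ast\mu$ itself; $S_\alpha^\ast\mu$ is just an integral of the kernel against $d\mu$, exactly as $R_\alpha^\ast\mu$ is), so with your normalization the pointwise comparison $S_\alpha^\ast\mu\gtrsim M^S_\alpha\mu$ would fail. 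The exponent shift from $r^n$ to $r^{n+2\alpha(1-p)}$ in $P^S_{\alpha p}\mu$ does not live in the maximal function; it appears in the Fubini step of Part~1 of Lemma~\ref{l22} because the $L^{p'}$ norm is now taken over $\mathbb R^{1+n}_+$ rather than $\mathbb R^n$, so that
\[
\int_{\mathbb R^{1+n}_+}\mu\big(B^{(\alpha)}_r(t,x)\big)^{p'}\,dt\,dx\;\lesssim\; r^{\,n+2\alpha}\int \mu\big(B^{(\alpha)}_r\big)^{p'-1}\,d\mu,
\]
and the extra $r^{2\alpha}$ combines with $r^{-n(p'-1)}$ to give $r^{-(n+2\alpha(1-p))(p'-1)}$ (using $(1-p)(p'-1)=-1$). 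The condition $p<1+\tfrac{n}{2\alpha}$ enters not through the maximal function normalization but to keep $n+2\alpha(1-p)>0$, i.e., to make $C_p^{(S_\alpha)}\big(B^{(\alpha)}_r\big)\approx r^{n+2\alpha(1-p)}$ nondegenerate and the Hedberg--Wolff integral honest. Once you correct the normalization of $M_\alpha\mu$, the rest of your outline goes through verbatim.
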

\begin{proof} $(a)$ Since $S_\alpha^\ast$ is determined by
$$
\int_{\mathbb{R}_{+}^{1+n}}(S_{\alpha}g) h\,dtdx=\int_{\mathbb{R}_{+}^{1+n}}g(t,x)\left(\int_{t}^{\infty}e^{-(s-t)(-\Delta_x)^{\alpha}}h(s,x)ds\right)dtdx\ \forall\ g,h\in C_{0}^{\infty}(\mathbb{R}_{+}^{1+n}),
$$
it follows that for any $h \in C_{0}^{\infty}(\mathbb{R}_{+}^{1+n})$ one has
$$
S_{\alpha}^{*}h(t,x)=\int_{t}^{\infty}e^{-(s-t)(-\Delta_x)^{\alpha}}h(s,x)ds=\int_{[t,\infty)\times\mathbb R^n}K^{(\alpha)}_{s-t}(x-y)h(s,t)\,dsdy.
$$
The definition of $S_{\alpha}^{*}$ is extended to the family of all Borel measures $\mu$ with compact support in $\mathbb R^{1+n}_+$:
$$
S_{\alpha}^{*}\mu(t,x)=\int_{[t,\infty)\times{\mathbb R^n}}K^{(\alpha)}_{s-t}(x-y)\,d\mu(s,y).
$$
According to \cite[Propostion 2.1]{JXYZ}, we have
$$
C_{p}^{(S_\alpha)}(K)=\sup\{\|\mu\|^{p}:\ \mu\in \mathcal{U}^{+}(K)\,\,\&\,\,\|S_{\alpha}^{*}\mu\|_{L^{p'}(\mathbb{R}_{+}^{1+n})}\leq 1\}.
$$

$(b)$  This can be proved via a slight modification of the argument for Lemma \ref{l22} - in particular - via replacing the maximal function $M_{\alpha}\mu(x)$ by
$$
M_{\alpha}\mu(t,x)=\sup_{r>0}{r^{-n}}\int_{B_{r}^{(\alpha)}(t,x)}d\mu.
$$
\end{proof}

The second indicates that $C_p^{(S_\alpha)}(K)$ of a given compact $K\subset\mathbb R^{1+n}_+$ can be realized by $\mu_K(K)$ of an element $\mu_K\in\mathcal{U}^+(K)$.

\begin{lemma}\label{Lemma 3.1} Let $K$ be a compact subset of $\mathbb{R}_{+}^{1+n}$. Then there exists a $\mu_{K}\in \mathcal{U}^{+}(K)$ such that
$$
\mu_{K}(K)=\int_{\mathbb{R}_{+}^{1+n}}(S_{\alpha}^{*}\mu_{K}(t,x))^{p'}dtdx=\int_{\mathbb{R}_{+}^{1+n}}S_{\alpha}(S_{\alpha}^{*}\mu_{K})^{p'-1}d\mu_{K}=C_{p}^{(S_\alpha)}(K).
$$
\end{lemma}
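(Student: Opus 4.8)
The plan is to obtain $\mu_K$ as an extremal measure for the dual description of $C_{p}^{(S_\alpha)}(K)$ given in Lemma~\ref{l31}$(a)$, and then to read off the three displayed equalities from the Euler--Lagrange normalization together with one application of Fubini's theorem. If $C_{p}^{(S_\alpha)}(K)=0$ one takes $\mu_K=0$, so assume $C:=C_{p}^{(S_\alpha)}(K)\in(0,\infty)$ (finiteness being clear from the defining infimum). Choose $\mu_j\in\mathcal{U}^{+}(K)$ with $\|S_{\alpha}^{*}\mu_j\|_{L^{p'}(\mathbb{R}_{+}^{1+n})}\le1$ and $\|\mu_j\|^{p}\to C$. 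The masses $\|\mu_j\|=\mu_j(K)$ stay bounded, so, regarding each $\mu_j$ as an element of $C(K)^{*}$ and using that $K$ is a compact metric space, the Banach--Alaoglu theorem gives a subsequence converging weak-$*$ to some $\mu_K\in\mathcal{U}^{+}(K)$.

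Next I would check two (semi)continuity facts under this weak-$*$ convergence. Testing against the constant $1\in C(K)$ yields $\|\mu_K\|=\mu_K(K)=\lim_j\mu_j(K)=\lim_j\|\mu_j\|$, hence $\|\mu_K\|^{p}=C$. And $\mu\mapsto\|S_{\alpha}^{*}\mu\|_{L^{p'}(\mathbb{R}_{+}^{1+n})}$ is weak-$*$ lower semicontinuous: for $0\le\psi\in C_{0}^{\infty}(\mathbb{R}_{+}^{1+n})$ the potential $S_\alpha\psi$ is continuous and bounded on $K$ (by dominated convergence together with $\int_{\mathbb{R}^{n}}K_{t}^{(\alpha)}\,dx=1$), so $\mu\mapsto\int_{\mathbb{R}_{+}^{1+n}}S_\alpha\psi\,d\mu$ is weak-$*$ continuous; combining the Fubini identity $\int S_{\alpha}^{*}\mu\cdot\psi\,dtdx=\int S_\alpha\psi\,d\mu$ with the $L^{p}$-density of nonnegative $C_{0}^{\infty}$-functions among nonnegative $L^{p}$-functions gives
$$
\|S_{\alpha}^{*}\mu\|_{L^{p'}(\mathbb{R}_{+}^{1+n})}=\sup\Big\{\int_{\mathbb{R}_{+}^{1+n}}S_\alpha\psi\,d\mu:\ 0\le\psi\in C_{0}^{\infty}(\mathbb{R}_{+}^{1+n}),\ \|\psi\|_{L^{p}(\mathbb{R}_{+}^{1+n})}\le1\Big\},
$$
a supremum of weak-$*$ continuous functionals, hence lower semicontinuous. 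Consequently $\|S_{\alpha}^{*}\mu_K\|_{L^{p'}}\le\liminf_j\|S_{\alpha}^{*}\mu_j\|_{L^{p'}}\le1$, and together with $\|\mu_K\|^{p}=C$ this shows the supremum in Lemma~\ref{l31}$(a)$ is attained at $\mu_K$.

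Finally I would normalize and identify the middle quantity. As $C>0$, $\mu_K\ne0$; and if $\|S_{\alpha}^{*}\mu_K\|_{L^{p'}}<1$ then $\lambda\mu_K$ with a suitable $\lambda>1$ would still be admissible but of larger mass, contradicting maximality, so $\|S_{\alpha}^{*}\mu_K\|_{L^{p'}}=1$. Replacing $\mu_K$ by $C^{1/p'}\mu_K$ (keeping the name) gives $\mu_K(K)=C^{1/p'}\cdot C^{1/p}=C$ and $\int_{\mathbb{R}_{+}^{1+n}}(S_{\alpha}^{*}\mu_K)^{p'}\,dtdx=(C^{1/p'})^{p'}=C$. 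Lastly, for any measure Fubini's theorem and the definitions of $S_\alpha,S_{\alpha}^{*}$ give
$$
\int_{\mathbb{R}_{+}^{1+n}}(S_{\alpha}^{*}\mu_K)^{p'}\,dtdx=\int_{\mathbb{R}_{+}^{1+n}}(S_{\alpha}^{*}\mu_K)^{p'-1}\,(S_{\alpha}^{*}\mu_K)\,dtdx=\int_{\mathbb{R}_{+}^{1+n}}S_\alpha\big((S_{\alpha}^{*}\mu_K)^{p'-1}\big)\,d\mu_K,
$$
so all the quantities in the statement equal $C=C_{p}^{(S_\alpha)}(K)$.

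I expect the main obstacle to be the weak-$*$ lower semicontinuity of the nonlinear energy $\mu\mapsto\|S_{\alpha}^{*}\mu\|_{L^{p'}}$; the ``supremum of continuous functionals'' representation above is the cleanest route, but it does hinge on the honest continuity of $S_\alpha\psi$ on the compact set $K$. An alternative is to argue directly that the truncated kernel $(s,y)\mapsto K_{s-t}^{(\alpha)}(x-y)\mathbf{1}_{\{s>t\}}$ is lower semicontinuous (the apparent trouble on $\{s=t\}$ dissolves because taking the $\liminf$ along points with $|x-y|$ far exceeding $(s-t)^{1/(2\alpha)}$ gives $0$) and then to invoke Fatou's lemma. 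Everything else is the standard equilibrium-measure machinery of nonlinear potential theory, see \cite{AH} and \cite{CX, JXYZ}.
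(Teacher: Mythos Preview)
Your proof is correct and follows essentially the same path as the paper's: extract a weak-$*$ limit of an extremizing sequence for the dual formula in Lemma~\ref{l31}(a), use lower semicontinuity of $\mu\mapsto\|S_\alpha^*\mu\|_{L^{p'}}$ to see that the limit is admissible, show the energy norm is exactly $1$, and rescale by $C^{1/p'}$.

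Two small tactical differences are worth noting. First, to force $\|S_\alpha^*\mu\|_{L^{p'}}=1$ you use the homogeneity/scaling argument; the paper instead tests $\mu$ against competitors $g$ for the primal capacity definition and applies H\"older. Second, for the third displayed equality you get $\int S_\alpha\big((S_\alpha^*\mu_K)^{p'-1}\big)\,d\mu_K=\int (S_\alpha^*\mu_K)^{p'}\,dtdx$ in one line by Tonelli, whereas the paper takes a longer route: it identifies $g_0=(S_\alpha^*\mu_K)^{p'-1}$ as the capacitary potential, invokes \cite[Proposition~2.1]{JXYZ} to obtain $S_\alpha g_0\ge1$ $\mu_K$-a.e.\ on $K$, and then sandwiches $C_p^{(S_\alpha)}(K)\le\int S_\alpha g_0\,d\mu_K\le\|S_\alpha^*\mu_K\|_{L^{p'}}\|g_0\|_{L^p}=C_p^{(S_\alpha)}(K)$. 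Your argument is cleaner for the lemma as stated; the paper's detour yields the extra information that $g_0$ is indeed the extremal function for the primal problem.
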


\begin{proof} Lemma \ref{l31}(a) (plus \cite[Propostion 2.1]{JXYZ}) ensures the existence of a sequence $\{\mu_{i}\}\subset \mathcal{U}^{+}(K)$ such that
$$
\|S_{\alpha}^{*}\mu_{i}\|_{L^{p'}(\mathbb{R}_{+}^{1+n})}\le 1\,\,\,\,\&\,\,\,\,\lim_{i\rightarrow \infty}\mu_{i}(K)=\big(C_{p}^{(S_\alpha)}(K)\big)^\frac1p
$$
and $\mu_{i}$ has a weak limit $\mu\in \mathcal{U}^{+}(K)$. Thus $\mu(K)=\big(C_{p}^{(S_\alpha)}(K)\big)^\frac1p$. It follows from the lower semi-continuity of $S_{\alpha}^{*}\mu$ on $\mathcal{U}^{+}(K)$ that $\|S_{\alpha}^{*}\mu\|_{L^{p'}(\mathbb{R}_{+}^{1+n})}\leq 1$.
Meanwhile, the following estimation
$$
\|\mu\|\leq \int_{\mathbb{R}_{+}^{1+n}}S_{\alpha}gd\mu=\int_{\mathbb{R}_{+}^{1+n}}g(t,x)S_{\alpha}^{*}\mu(t,x)dtdx\leq \|g\|_{L^{p}(\mathbb{R}_{+}^{1+n})}\|S_{\alpha}^{*}\mu\|_{L^{p'}(\mathbb{R}_{+}^{1+n})}
$$
gives $\|S_{\alpha}^{*}\mu\|_{L^{p'}(\mathbb{R}_{+}^{1+n})}\geq 1$.
So, $\|S_{\alpha}^{*}\mu\|_{L^{p'}(\mathbb{R}_{+}^{1+n})}= 1$.

Choosing $\mu_{K}=C_{p}^{(S_\alpha)}(K)^{\frac{1}{p'}}\mu$ and using $\mu(K)=\big(C_p^{(S_\alpha)}(K)\big)^\frac1p$, one has
$$
\mu_{K}(K)=\int_{\mathbb{R}_{+}^{1+n}}(S_{\alpha}^{*}\mu_{K}(t,x))^{p'}dtdx=C_{p}^{(S_\alpha)}(K).
$$

Suppose that $g_{0}$ is the capacitary potential of $C_{p}^{(S_\alpha)}(K)$, i.e.,
$$
\|g_{0}\|_{L^{p}(\mathbb{R}_{+}^{1+n})}^{p}=C_{p}^{(S_\alpha)}(K)\,\,\&\,\,S_{\alpha} g_{0}\geq\textbf{1}_{K}.
$$
Then $g_{0}(t,x)=(S_{\alpha}^{*}\mu_{K})^{p'-1}(t,x)$. A further use of \cite[Propostion 2.1]{JXYZ} derives
$$
\mu_{K}(\{(t,x)\in K: S_{\alpha}g_{0}(t,x)<1\})=0,
$$
whence
$$
S_{\alpha}(g_{0})=S_{\alpha}(S_{\alpha}^{*}\mu_{K})^{p'-1}\geq 1\,\,\,\,\hbox{a.e.} \,\,\mu_{K}\,\,\hbox{on}\,\,K.
$$

Now, Fubini's theorem and the H\"{o}lder inequality are utilized to derive
\begin{align*}
C_{p}^{(S_\alpha)}(K)&\leq \int_{\mathbb{R}_{+}^{1+n}}S_{\alpha}g_{0}d\mu_{K}\\
&= \int_{\mathbb{R}_{+}^{1+n}}\int_{0}^{t}\int_{\mathbb{R}^{n}}K_{t-s}^{(\alpha)}(x-y)f(s,y)dydsd\mu_{K}\\
&=\int_{\mathbb{R}^{n}}\int_{0}^{t}\int_{s}^{\infty}\int_{\mathbb{R}^{n}}K_{t-s}^{(\alpha)}(x-y)d\mu_{K}f(s,y)dsdy\\
&\leq \int_{\mathbb{R}_{+}^{1+n}}S_{\alpha}^{*}\mu_{K}(t,x)g_{0}(t,x)dtdx\\
&\leq \|S_{\alpha}^{*}\mu_{K}\|_{L^{p'}(\mathbb{R}_{+}^{1+n})}\|g_{0}\|_{L^{p}(\mathbb{R}_{+}^{1+n})}\\
&=C_{p}^{(S_\alpha)}(K),
\end{align*}
thereby completing the proof.
\end{proof}

The third is concerning the weak and strong type estimates for $C_p^{(S_\alpha)}$.

\begin{lemma}\label{Lemma 3.2} Let $1<p<\infty$ and $L^{p}_{+}(\mathbb{R}_{+}^{1+n})$ stand for the class of all nonnegative functions in $L^{p}(\mathbb{R}_{+}^{1+n})$. If $g\in L^{p}_{+}(\mathbb{R}_{+}^{1+n})$ and $\lambda>0$, then:

$(a)$ $C_{p}^{(S_\alpha)}(\{(t,x)\in \mathbb{R}_{+}^{1+n}: S_{\alpha}g(t,x)\geq \lambda\})\leq \lambda^{-p}\|g\|_{L^{p}(\mathbb{R}_{+}^{1+n})}^{p}$;

$(b)$ $\int_{0}^{\infty}C_{p}^{(S_\alpha)}(\{(t,x)\in \mathbb{R}_{+}^{1+n}: S_{\alpha}g(t,x)\geq \lambda\})d\lambda^{p}\lesssim \|g\|_{L^{p}(\mathbb{R}_{+}^{1+n})}^{p}.$
\end{lemma}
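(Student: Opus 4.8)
\emph{Part (a)} follows straight from the definition of $C_p^{(S_\alpha)}$. The kernel $K^{(\alpha)}_{t-s}(x-y)$ is nonnegative, so for $g\in L^p_+(\mathbb R^{1+n}_+)$ the function $S_\alpha g$ is lower semicontinuous (apply Fatou's lemma to $\mathbf 1_{\{s<t\}}K^{(\alpha)}_{t-s}(x-y)g(s,y)$); consequently $C_p^{(S_\alpha)}(E)\le\|h\|_{L^p(\mathbb R^{1+n}_+)}^p$ for every $E\subset\mathbb R^{1+n}_+$ whenever $0\le h$ and $S_\alpha h\ge1$ on $E$. Taking $E=\{S_\alpha g\ge\lambda\}$ and $h=(\lambda-\varepsilon)^{-1}g$ with $\varepsilon\in(0,\lambda)$ (so $S_\alpha h\ge(\lambda-\varepsilon)^{-1}\lambda>1$ on $E$) gives $C_p^{(S_\alpha)}(\{S_\alpha g\ge\lambda\})\le(\lambda-\varepsilon)^{-p}\|g\|_{L^p}^p$, and letting $\varepsilon\downarrow0$ yields (a).

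\emph{Part (b)} is the fractional-heat counterpart of Maz'ya's capacitary strong-type inequality; I would adapt the scheme of \cite[Theorem 2.1]{COV} (cf.\ also \cite{AH}), using Lemma \ref{Lemma 3.1} as the main tool. Since $\lambda\mapsto C_p^{(S_\alpha)}(\{S_\alpha g\ge\lambda\})$ is nonincreasing, a dyadic splitting of $\lambda$ over the intervals $(2^j,2^{j+1}]$, $j\in\mathbb Z$, gives
$$\int_0^\infty C_p^{(S_\alpha)}(\{S_\alpha g\ge\lambda\})\,d\lambda^p\ \approx\ \sum_{j\in\mathbb Z}2^{jp}\,C_p^{(S_\alpha)}(E_j),\qquad E_j:=\{S_\alpha g>2^j\}\ \text{(open)}.$$
We work with a finite range of $j$ and let it exhaust $\mathbb Z$ at the end (legitimate by monotone convergence and because part (a) gives $C_p^{(S_\alpha)}(E_j)\le2^{-jp}\|g\|_{L^p}^p<\infty$). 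For each $j$ choose a compact $K_j\subset E_j$ with $C_p^{(S_\alpha)}(K_j)\ge\tfrac12 C_p^{(S_\alpha)}(E_j)$; replacing $K_j$ by the union of such near-optimal compact subsets of $E_j,E_{j+1},\dots$ we may assume $K_j\supset K_{j+1}$. Let $\mu^j=\mu_{K_j}\in\mathcal U^+(K_j)$ be the capacitary measure furnished by Lemma \ref{Lemma 3.1}, so that $\mu^j(K_j)=\|S_\alpha^*\mu^j\|_{L^{p'}}^{p'}=C_p^{(S_\alpha)}(K_j)$ and $(S_\alpha^*\mu^j)^{p'-1}$ is an admissible competitor for $C_p^{(S_\alpha)}(K_j)$. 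Since $S_\alpha g>2^j$ on $K_j$,
$$2^j\,C_p^{(S_\alpha)}(K_j)=2^j\mu^j(K_j)\le\int_{K_j}S_\alpha g\,d\mu^j=\int_{\mathbb R^{1+n}_+}g\,S_\alpha^*\mu^j\,dt\,dx;$$
multiplying by $2^{j(p-1)}$, summing in $j$, and applying Hölder's inequality on $(\mathbb R^{1+n}_+,dt\,dx)$,
$$\sum_j2^{jp}\,C_p^{(S_\alpha)}(K_j)\ \le\ \int_{\mathbb R^{1+n}_+}g\,S_\alpha^*\nu\,dt\,dx\ \le\ \|g\|_{L^p}\,\|S_\alpha^*\nu\|_{L^{p'}},\qquad\nu:=\sum_j2^{j(p-1)}\mu^j.$$
Thus all reduces to the \emph{almost-orthogonality estimate}
\begin{equation}\label{ao}
\|S_\alpha^*\nu\|_{L^{p'}(\mathbb R^{1+n}_+)}^{p'}\ \lesssim\ \sum_j2^{jp}\,C_p^{(S_\alpha)}(K_j);
\end{equation}
indeed, feeding \eqref{ao} into the previous display gives $\sum_j2^{jp}C_p^{(S_\alpha)}(K_j)\lesssim\|g\|_{L^p}\big(\sum_j2^{jp}C_p^{(S_\alpha)}(K_j)\big)^{1/p'}$, hence $\sum_j2^{jp}C_p^{(S_\alpha)}(K_j)\lesssim\|g\|_{L^p}^p$, and combined with $C_p^{(S_\alpha)}(E_j)\le2C_p^{(S_\alpha)}(K_j)$ this is exactly (b).

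\emph{The main obstacle is \eqref{ao}.} Writing $\|\phi\|_{p'}^{p'}=\int\phi^{p'-1}\phi$ and using the adjointness of $S_\alpha,S_\alpha^*$ together with Fubini,
$$\|S_\alpha^*\nu\|_{L^{p'}}^{p'}=\sum_k2^{k(p-1)}\int_{K_k}S_\alpha\big((S_\alpha^*\nu)^{p'-1}\big)\,d\mu^k,$$
so \eqref{ao} follows once one establishes the uniform pointwise bound $S_\alpha\big((S_\alpha^*\nu)^{p'-1}\big)\lesssim 2^k$ for $\mu^k$-a.e.\ point of $K_k$, every $k\in\mathbb Z$ (the $k$-th term is then $\lesssim 2^{kp}\mu^k(K_k)=2^{kp}C_p^{(S_\alpha)}(K_k)$). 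One splits $\nu=\nu_{<k}+\nu_{\ge k}$, uses $(a+b)^{p'-1}\lesssim a^{p'-1}+b^{p'-1}$, and estimates the two resulting potentials on $K_k$ separately. This is where the following ingredients enter: the extremal identities of the $\mu^j$ from Lemma \ref{Lemma 3.1}, notably $S_\alpha(S_\alpha^*\mu^j)^{p'-1}=1$ $\mu^j$-a.e.\ on $K_j$ together with the global control of the capacitary potentials coming from the minimization; the bound $C_p^{(S_\alpha)}(K_j)\le2^{-jp}\|g\|_{L^p}^p$ from part (a), which makes the high-index tail $\sum_{j\ge k}2^{j(p-1)}C_p^{(S_\alpha)}(K_j)$ geometrically small; the standing hypothesis $p<1+\tfrac n{2\alpha}$, i.e.\ $n+2\alpha(1-p)>0$, which forces convergence at large scales of the Wolff-type integrals attached by Lemma \ref{l31} to $S_\alpha^*\mu^j$; and a routine but unavoidable case split according as $p\ge2$ (so that $t\mapsto t^{p'-1}$ is subadditive) or $1<p<2$ (where one inserts a Hölder step in the index $j$ with a small exponent shift) when carrying out the geometric summations. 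This bookkeeping is the technical heart; everything else is soft. (Alternatively, (b) can be obtained by the $\alpha$-dyadic route of Section \ref{section2}, replacing $C_p^{(S_\alpha)}$ and $S_\alpha$ by their $\alpha$-dyadic models and mimicking Step 1 of the proof of Theorem \ref{t21}; I prefer the capacitary-measure argument here because it stays closer to the objects of Lemma \ref{Lemma 3.1}.)
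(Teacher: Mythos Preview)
Your outline follows the same route as the paper: dyadic decomposition of $\lambda$, capacitary measures $\mu^j$ from Lemma~\ref{Lemma 3.1}, reduction to the almost-orthogonality estimate \eqref{ao} (which is the paper's (3.3)), and a case split $p>2$ versus $1<p\le2$ for the latter. Two of the ingredients you list for \eqref{ao}, however, are not used in the paper and would in fact be obstacles. First, invoking part (a) to make $\sum_{j\ge k}2^{j(p-1)}C_p^{(S_\alpha)}(K_j)$ small inserts a factor $\|g\|_{L^p}^p$ into the right side of \eqref{ao}, which must be a bound independent of $g$ for the closing step to go through; the paper's proof of (3.3) uses only the identities $\mu_k(K_k)=\|S_\alpha^*\mu_k\|_{L^{p'}}^{p'}=C_p^{(S_\alpha)}(K_k)$ together with the nesting $K_i\supset K_k$ for $i\le k$, and never appeals to (a). Second, the hypothesis $p<1+\tfrac n{2\alpha}$ and the Wolff integrals of Lemma~\ref{l31}(b) play no role here; the paper's argument for (3.3) is a direct $L^{p'}$ computation---H\"older in the double index $(i,k)$ when $p>2$, Minkowski when $1<p\le2$---that collapses the cross terms onto the diagonal $\sum_k2^{kp}\|S_\alpha^*\mu_k\|_{L^{p'}}^{p'}$. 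Procedurally, the paper also reduces at the outset to $g\in C_0^\infty(\mathbb R^{1+n}_+)$ so that the closed level sets $\{S_\alpha g\ge2^i\}$ are themselves compact, dispensing with your compact-approximation step.
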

\begin{proof} $(a)$ This follows immediately from the definition of $C_{p}^{(S_\alpha)}$.

$(b)$ It is enough to check this inequality for any nonnegative function $g\in C_{0}^{\infty}(\mathbb{R}_{+}^{1+n})$. The forthcoming demonstration is a slight modification of the argument for \cite[Lemma 3.1]{CX}.

For each $i=0,\pm 1, \pm 2,\cdots$ and any nonnegative function $g\in C_{0}^{\infty}(\mathbb{R}_{+}^{1+n})$, we follow the proof of \cite[Theorem 7.1.1]{AH} to write
$$
K_{i}=\{(t,x)\in \mathbb{R}_{+}^{1+n}: S_{\alpha}g(t,x)\geq 2^{i}\}.
$$
Assume that $\mu_{i}$ is the measure obtained in Lemma \ref{Lemma 3.1} for $K_{i}$. Then by duality and H\"{o}lder's inequality, one has
\begin{align*}
\sum_{i=-\infty}^{\infty}2^{i p}\mu_{i}(\mathbb{R}_{+}^{1+n})&\leq \sum_{i=-\infty}^{\infty}2^{i (p-1)}\int_{\mathbb{R}_{+}^{1+n}}g(t,x)S_{\alpha}^{*}\mu_{i}(t,x)dtdx\\
&\lesssim \|g\|_{L^{p}(\mathbb{R}_{+}^{1+n})}\left\|\sum_{i=-\infty}^{\infty}2^{i (p-1)}S_{\alpha}^{*}\mu_{i}\right\|_{L^{p'}(\mathbb{R}_{+}^{1+n})}\\
&\equiv\|g\|_{L^{p}(\mathbb{R}_{+}^{1+n})}\|\eta\|_{L^{p'}(\mathbb{R}_{+}^{1+n})},
\end{align*}
where
$$
\eta(t,x)=\sum_{i=-\infty}^{\infty}2^{i (p-1)}S_{\alpha}^{*}\mu_{i}(t,x).
$$
For $k=0, \pm 1, \pm 2, \cdots,$ let
$$\eta_{k}(t,x)=\sum_{i=-\infty}^{k}2^{i(p-1)}S_{\alpha}^{*}\mu_{i}(t,x).$$
Then it is easy to find that
$$
\eta_{k}\in L^{p'}(\mathbb{R}_{+}^{1+n})\ \&\ \lim_{k\rightarrow \infty}\eta_{k}=\eta\ \hbox{in}\ L^{p'}(\mathbb R^{1+n}_+).
$$
We next to prove that
\begin{equation}\label{3.3}
\|\eta\|_{L^{p'}(\mathbb{R}_{+}^{1+n})}^{p'}\lesssim \sum_{i=-\infty}^{\infty}2^{i p}\mu_{i}(\mathbb{R}_{+}^{1+n})
\end{equation}
according to two cases.

{\it Case: $2<p<\infty$}. Notice first that
\begin{equation}\label{3.4}
\eta(t,x)^{p'}=p'\sum_{k=-\infty}^{\infty}\eta_{k}(t,x)^{p'-1}2^{k(p-1)}S_{\alpha}^{*}\mu_{k}(t,x)\,\,\,\,\hbox{a.e.}\,\,(t,x)\in \mathbb{R}_{+}^{1+n}.
\end{equation}
So, the H\"{o}lder inequality yields that
$$
\|\eta\|_{L^{p'}(\mathbb{R}_{+}^{1+n})}^{p'}\lesssim\frac{ \left(\int_{\mathbb{R}_{+}^{1+n}}\sum_{k=-\infty}^{\infty}2^{kp}(S_{\alpha}^{*}\mu_{k})^{p'}(t,x)dtdx\right)^{2-p'}}{ \left(\int_{\mathbb{R}_{+}^{1+n}}\sum_{k=-\infty}^{\infty}\eta_{k}(t,x)2^{k}(S_{\alpha}^{*}\mu_{k})^{p'-1}(t,x)dtdx\right)^{1-p'}}.
$$
Since Lemma \ref{Lemma 3.1} gives
$$
\int_{\mathbb{R}_{+}^{1+n}}\sum_{k=-\infty}^{\infty}2^{kp}(S_{\alpha}^{*}\mu_{k})^{p'}(t,x)dtdx=\sum_{k=-\infty}^{\infty}2^{kp}\int_{\mathbb{R}_{+}^{1+n}}(S_{\alpha}^{*}\mu_{k})^{p'}(t,x)dtdx
=\sum_{k=-\infty}^{\infty}2^{kp}\mu_{k}(\mathbb{R}_{+}^{1+n})
$$
and
\begin{align*}
&\int_{\mathbb{R}_{+}^{1+n}}\sum_{k=-\infty}^{\infty}\eta_{k}(t,x)2^{k}(S_{\alpha}^{*}\mu_{k})^{p'-1}(t,x)dtdx\\
&\ \ =
\sum_{k=-\infty}^{\infty}\sum_{i\leq k}2^{i(p-1)+k}\int_{\mathbb{R}_{+}^{1+n}}(S_{\alpha}^{*}\mu_{i}(t,x))(S_{\alpha}^{*}\mu_{k}(t,x))^{p'-1}dtdx\\
&\ \ \lesssim \sum_{k=-\infty}^{\infty}2^{kp}C_{p}^{(S_\alpha)}(K_{k})\\
&\ \ \approx \sum_{k=-\infty}^{\infty}2^{kp}\mu_{k}(\mathbb{R}_{+}^{1+n}),
\end{align*}
$(\ref{3.3})$ is true for $2<p<\infty$.

{\it Case: $1<p\leq 2$.} A combination of $(\ref{3.4})$ and Minkowski's inequality gives
\begin{align*}
\|\eta\|_{L^{p'}(\mathbb{R}_{+}^{1+n})}^{p'}&=\sum_{k=-\infty}^{\infty}2^{k(p-1)}
\int_{\mathbb{R}_{+}^{1+n}}\left(\sum_{i=-\infty}^{k}2^{i(p-1)}S_{\alpha}^{*}\mu_{i}(t,x)\right)^{p'-1}(S_{\alpha}^{*}\mu_{k}(t,x))dtdx\\
&\lesssim \sum_{k=-\infty}^{\infty}2^{k(p-1)}
\left(\sum_{i=-\infty}^{k}2^{i(p-1)}\left(\int_{\mathbb{R}_{+}^{1+n}}\big(S_{\alpha}^{*}\mu_{i}(t,x)\big)^{p'-1}S_{\alpha}^{*}\mu_{i}(t,x)dtdx\right)^{\frac{1}{p'-1}}\right)^{p'-1}\\
&\approx \sum_{k=-\infty}^{\infty}2^{k(p-1)}\left(\sum_{i=-\infty}^{k}2^{i(p-1)}C_{p}^{(S_\alpha)}(K_{i})^{\frac{1}{p'-1}}\right)^{p'-1}\\
&\lesssim \sum_{k=-\infty}^{\infty}2^{k(p-1)}C_{p}^{(S_\alpha)}(K_{k})\\
&\lesssim \sum_{k=-\infty}^{\infty}2^{k(p-1)}\mu_{k}(\mathbb{R}_{+}^{1+n}),
\end{align*}
whence yields $(\ref{3.3})$ under $1<p\le 2$.

As a consequence, \eqref{3.3} plus
$$
\sum_{i=-\infty}^{\infty}2^{ip}\mu_{i}(\mathbb{R}_{+}^{1+n})\lesssim \sum_{i=-\infty}^{\infty}2^{ip}C_{p}^{(S_\alpha)}(K_{i})\lesssim \|g\|_{L^{p}(\mathbb{R}_{+}^{1+n})}^{p},
$$
implies the desired inequality in $(b)$.
\end{proof}

Now, Theorem \ref{MT} for $T_\alpha=S_\alpha$ is contained in the following assertion.

\begin{theorem}
\label{t31}
For a nonnegative Radon measure $\mu$ on $\mathbb R^{1+n}_+$ and $\lambda>0$ set
$$
C_{S}(\mu;\lambda)=\inf\Big\{C_{p}^{(S_\alpha)}(K):\ \hbox{compact}\,\, K\subset \mathbb{R}_{+}^{1+n}\,\,\&\,\,\mu(K)\geq \lambda\Big\}.
$$

\begin{enumerate}

\item If $1<p<\min\{q,1+\frac{n}{2\alpha}\}$ then
$$
\eqref{eS}\Leftrightarrow\sup_{\lambda>0}\frac{\lambda^{\frac{p}{q}}}{C_{S}(\mu;\lambda)}<\infty\Leftrightarrow\sup_{(r,t_0,x_0)\in\mathbb R_+\times\mathbb R_+\times\mathbb R^n}\frac{\mu(B^{(\alpha)}_{r}(t_{0}, x_{0}))}{r^\frac{(n+2\alpha(1-p))q}{p}}<\infty.
$$

\item If $1<p=q<1+\frac{n}{2\alpha}$ then
$$
\eqref{eS}\Leftrightarrow\sup_{\lambda>0}\frac{\lambda}{C_{S}(\mu;\lambda)}<\infty\quad\left(\Rightarrow\sup_{(r,t_0,x_0)\in\mathbb R_+\times\mathbb R_+\times\mathbb R^n}\frac{\mu(B^{(\alpha)}_{r}(t_{0}, x_{0}))}{r^{n+2\alpha(1-p)}}<\infty\right).
$$

\item $1<q<p<1+\frac{n}{2\alpha}$ then
$$
\eqref{eS}\Leftrightarrow\int_{0}^{\infty}\left(\frac{\lambda^{\frac{p}{q}}}{C_{S}(\mu;\lambda)}\right)^{\frac{q}{p-q}}\frac{d\lambda}{\lambda}<\infty\Leftrightarrow P_{\alpha p}^{S}\mu \in L_\mu^{q(p-1)/(p-q)}(\mathbb{R}_{+}^{1+n}).
$$
\end{enumerate}
\end{theorem}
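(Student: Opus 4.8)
The proof splits into a ``soft'' part, which parallels Theorem~\ref{t21} essentially verbatim, and a ``hard'' part, which is the genuinely new content. \textbf{Soft part.} Parts (1) and (2), together with the left-hand equivalence of part (3) (i.e. $\eqref{eS}\Leftrightarrow\int_0^\infty(\lambda^{p/q}/C_S(\mu;\lambda))^{q/(p-q)}\,d\lambda/\lambda<\infty$), are obtained by transcribing the capacitary machinery behind \cite[Theorems~3.2--3.3]{CX}: for $p\le q$ one runs Maz'ya's capacitary criterion, fed here by the weak-type estimate Lemma~\ref{Lemma 3.2}(a) and the dual description Lemma~\ref{l31}(a); for $p>q$ one follows the scheme of \cite[Theorem~2.1]{COV}, fed by the capacitary strong-type estimate Lemma~\ref{Lemma 3.2}(b) and the equilibrium measure $\mu_K$ produced in Lemma~\ref{Lemma 3.1}. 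The parabolic-ball reformulations in (1)--(2) then come from inserting $C_p^{(S_\alpha)}\big(B^{(\alpha)}_r(t_0,x_0)\big)\approx r^{\,n+2\alpha(1-p)}$ from \eqref{capBall}; for $p<q$ one additionally checks, exactly as in the $R_\alpha$ case, that the ball condition self-improves to the set condition, while for $p=q$ only the displayed implication (ball condition necessary, not sufficient) survives, as already flagged in Section~\ref{section1}.

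\textbf{Hard part, forward direction.} What remains is the right-hand equivalence $\eqref{eS}\Leftrightarrow P_{\alpha p}^{S}\mu\in L_\mu^{q(p-1)/(p-q)}(\mathbb{R}_{+}^{1+n})$, which the plan is to prove by repeating Steps~1--2 of the proof of Theorem~\ref{t21} with $S_\alpha, S_\alpha^{*}, P_{\alpha p}^{S}\mu$ and the space-time parabolic maximal functions replacing $R_\alpha, R_\alpha^{*}, P_{\alpha p}^{R}\mu$ and their purely spatial analogues, and with the homogeneity exponent $n$ replaced throughout by $n+2\alpha(1-p)>0$ (positivity being exactly the hypothesis $p<1+\tfrac{n}{2\alpha}$). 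For $\eqref{eS}\Rightarrow P_{\alpha p}^{S}\mu\in L_\mu^{q(p-1)/(p-q)}$ I would first dualize \eqref{eS} to $\|S_\alpha^{*}(\mathbf{g}\,d\mu)\|_{L^{p'}(\mathbb{R}_{+}^{1+n})}^{p'}\lesssim\|\mathbf{g}\|_{L_\mu^{q'}(\mathbb{R}_{+}^{1+n})}^{p'}$; next observe that Lemma~\ref{l31}(b) remains valid with $\mathbf{g}\,d\mu$ in place of $d\mu$ and with the dyadic potential $P_{\alpha p}^{d,S}\mu=\sum_{Q^{(\alpha)}_{l}\in\mathcal D^\alpha}\big(\mu(Q^{(\alpha)}_{l})\,l^{-(n+2\alpha(1-p))}\big)^{p'-1}\mathbf 1_{Q^{(\alpha)}_{l}}$ in place of $P_{\alpha p}^{S}\mu$, over the parabolic dyadic mesh $\mathcal D^\alpha$ of cubes $Q^{(\alpha)}_{l}$ with $x$-side $l$ and $t$-side $l^{2\alpha}$; this reduces matters to a weighted dyadic testing inequality, which one solves by plugging $\mathbf{g}=(M_\mu^{d}h)^{1/p'}$, using the $L_\mu^{p}$-boundedness of the dyadic $\mu$-maximal function, and dualizing, to get $P_{\alpha p}^{d,S}\mu\in L_\mu^{q(p-1)/(p-q)}$; and finally pass from the dyadic potential to $P_{\alpha p}^{S}\mu$ via the pointwise domination $P_{\alpha p,\rho}^{S}\mu\lesssim\rho^{-(n+1)}\int_{|\tau|\lesssim\rho}P_{\alpha p}^{d,\tau,S}\mu\,d\tau$, followed by H\"{o}lder, Fubini and $\rho\to\infty$, treating doubling and non-doubling $\mu$ separately as in Cases~1.1--1.2.

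\textbf{Hard part, reverse direction.} For $P_{\alpha p}^{S}\mu\in L_\mu^{q(p-1)/(p-q)}\Rightarrow\eqref{eS}$ I would dualize \eqref{eS} again, invoke Lemma~\ref{l31}(b), and reduce to showing $\int_{\mathbb{R}_{+}^{1+n}}P_{\alpha p}^{S}(\mathbf{g}\,d\mu)\,\mathbf{g}\,d\mu\lesssim\|\mathbf{g}\|_{L_\mu^{q'}(\mathbb{R}_{+}^{1+n})}^{p'}$ for $0\le\mathbf{g}\in L_\mu^{q'}(\mathbb{R}_{+}^{1+n})$; the pointwise bound $P_{\alpha p}^{S}(\mathbf{g}\,d\mu)(t,x)\lesssim\big(M_\mu\mathbf{g}(t,x)\big)^{p'-1}P_{\alpha p}^{S}\mu(t,x)$, with $M_\mu$ the centered parabolic Hardy--Littlewood maximal operator relative to $\mu$, then closes the estimate by two applications of H\"{o}lder's inequality and the $L_\mu^{q'}(\mathbb{R}_{+}^{1+n})$-boundedness of $M_\mu$ (cf. \cite{F}), exactly as in Step~2 for $R_\alpha$.

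\textbf{Main obstacle.} The step that demands real care is the dyadic-to-continuous passage for a possibly non-doubling $\mu$: one must follow the anisotropic parabolic cubes $Q^{(\alpha)}_{l}$ through the translation-averaging and verify that the exponent reproduced at each scale is $n+2\alpha(1-p)$ rather than $n$ --- the discrepancy being accounted for precisely by the extra factor $l^{2\alpha}$ coming from the time direction of $(1+n)$-dimensional Lebesgue measure, which is what converts the spatial normalization of $R_\alpha$ into the space-time normalization appearing in $P_{\alpha p}^{S}\mu$. The only other structural novelty relative to the $R_\alpha$ argument is the causal (forward-in-time) support of $S_\alpha^{*}\mu$, but since each parabolic ball $B^{(\alpha)}_r(t,x)$ lies in the forward cone of $(t,x)$ this is harmless and is already absorbed into Lemmas~\ref{l31} and \ref{Lemma 3.1}.
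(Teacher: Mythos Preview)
Your proposal is correct and follows essentially the same approach as the paper. The only difference is presentational: whereas you treat parts (1), (2), and the left equivalence of (3) as a ``soft'' transcription of the capacitary machinery from \cite{CX}, the paper actually writes these arguments out in full for $S_\alpha$ (using Lemmas~\ref{l31}, \ref{Lemma 3.1}, \ref{Lemma 3.2} exactly as you indicate, and carrying out the ball-to-capacity direction of (1) via the $I_1+I_2$ splitting with $\delta=\mu(K)^{p/(q(n+2\alpha-2\alpha p))}$); conversely, for the right equivalence in (3) the paper simply refers the reader to a modification of the $R_\alpha$ argument along the lines of \cite[Theorem~2.1]{COV}, which is precisely the plan you spell out in your ``hard part.''
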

\begin{proof} (1) Suppose \eqref{eS} is valid. Then, for a given compact set $K\subset \mathbb{R}_{+}^{1+n}$, an application of Lemma \ref{Lemma 3.1} and  H\"{o}lder's inequality gives
$$
\int_{\mathbb{R}_{+}^{1+n}}gS_{\alpha}^{*}\mu_{K}dtdx=\int_{\mathbb{R}_{+}^{1+n}}S_{\alpha} gd\mu_{K}\leq \|S_{\alpha} g\|_{L_\mu^{q}(\mathbb{R}_{+}^{1+n})}\mu(K)^{\frac{1}{q'}}\lesssim \|g\|_{L^{p}(\mathbb{R}_{+}^{1+n})}\mu(K)^{\frac{1}{q'}},
$$
whence
$$
\|S_{\alpha}^{*}\mu_{K}\|_{L^{p'}(\mathbb{R}_{+}^{1+n})}\lesssim \mu(K)^{\frac{1}{q'}}.
$$
This shows that for
$$
E_{\lambda}(g)\equiv\Big\{(t,x)\in \mathbb{R}_{+}^{1+n}:|S_{\alpha} g(t,x)|\geq \lambda\Big\}\quad\forall\quad\lambda>0
$$
one has
\begin{align*}
\lambda\mu(E_{\lambda}(g))&\leq \int_{\mathbb{R}_{+}^{1+n}}|S_{\alpha} g|d\mu_{E_{\lambda}}\\
&\lesssim
\|g\|_{L^{p}(\mathbb{R}_{+}^{1+n})}\|S_{\alpha}^{*}\mu_{E_{\lambda}}\|_{L^{p'}(\mathbb{R}_{+}^{1+n})}\\
&\lesssim \|g\|_{L^{p}(\mathbb{R}_{+}^{1+n})}\mu(E_{\lambda})^{\frac{1}{q'}}.
\end{align*}
Therefore, we obtain
$$
\sup_{\lambda>0}\lambda^{q}\mu(E_{\lambda}(g))\lesssim \|g\|_{L^{p}(\mathbb{R}_{+}^{1+n})}^{q}.
$$
Picking a function $g\in L^{p}(\mathbb{R}_{+}^{1+n})$ such that $S_{\alpha} g\geq 1$ on a given compact $K\subset \mathbb{R}_{+}^{1+n}$, we conclude that
$$
\mu(K)^{\frac{1}{q}}\lesssim C_{p}^{(S_\alpha)}(K)^{\frac{1}{p}}\ \hbox{and\ hence}\ \lambda^{\frac{1}{q}}\lesssim C_{S}(\mu;\lambda)^{\frac{1}{p}}\ \forall\ \lambda>0.
$$

Conversely, if the last inequality is valid, then
$$
\mu(K)^{\frac{1}{q}}\lesssim C_{p}^{(S_\alpha)}(K)^{\frac{1}{p}}\,\,\,\,\forall \,\,\hbox{compact}\,\, K\subset \mathbb{R}_{+}^{1+n}.
$$
Lemma \ref{Lemma 3.2} is used to derive that if $g\in L^p(\mathbb R^{1+n}_+)$ then
\begin{align*}
\int_{\mathbb{R}_{+}^{1+n}}|S_{\alpha} g|^{q}d\mu&=\int_{0}^{\infty}\mu(E_{\lambda})d\lambda^{q}\\
&\lesssim \int_{0}^{\infty}C_{p}^{(S_\alpha)}(E_{\lambda})^{\frac{q-p}{p}}C_{p}^{(S_\alpha)}(E_{\lambda})\lambda^{q-p}d\lambda^{p}\\
&\lesssim \|g\|_{L^{p}(\mathbb{R}_{+}^{1+n})}^{q-p}\int_{0}^{\infty}C_{p}^{(S_\alpha)}(E_{\lambda})d\lambda^{p}\\
&\lesssim \|g\|_{L^{p}(\mathbb{R}_{+}^{1+n})}^{q}.
\end{align*}
Namely, \eqref{eS} holds.

Next, an application of \eqref{capBall} derives that
$$
\lambda^{\frac{1}{q}}\lesssim C_{S}(\mu;\lambda)^{\frac{1}{p}}\ \forall\ \lambda>0\Rightarrow
\mu(B_{r}^{(\alpha)}(t_{0},x_{0}))\lesssim r^{\frac{q}{p}(n+2\alpha-2\alpha p)}\,\,\,\,\forall\ r>0.
$$
For the reverse implication, we first note that $(t,x)\in B_{r}^{(\alpha)}(t_{0},x_{0})$ ensures $K_{t-t_{0}}^{(\alpha)}(x-x_{0})\gtrsim r^{-n}$. This, along with Fubini's theorem, yields
\begin{align*}
S_{\alpha}^{*}\mu_{K}(t_{0},x_{0})&\approx \int_{t_{0}}^{\infty}\int_{\mathbb{R}^{n}}\left(\int_{(K_{t-t_{0}}^{(\alpha)}(x-x_{0}))^{-\frac{1}{n}}}^{\infty}\frac{dr}{r^{n+1}}\right)d\mu_{K}\\
&\lesssim \int_{t_{0}}^{\infty}\int_{\mathbb{R}^{n}}\left(\int_{0}^{\infty}\textbf{1}_{B_{r}^{(\alpha)}(t_{0},x_{0})}\frac{dr}{r^{n+1}}\right)d\mu_{K}\\
&\lesssim \int_{0}^{\infty}\mu_{K}(B_{r}^{(\alpha)}(t_{0},x_{0}))\frac{dr}{r^{n+1}}.
\end{align*}
Therefore, for a $\delta>0$ to be determined later, we use the Minkwoski inequality to get
\begin{align*}
\|S_{\alpha}^{*}\mu_{K}\|_{L^{p'}(\mathbb{R}_{+}^{1+n})}&\lesssim \int_{\mathbb{R}_{+}^{1+n}}\left(\int_{0}^{\infty}\mu_{K}(B_{r}^{(\alpha)}(t_{0},x_{0}))\frac{dr}{r^{n+1}}\right)^{p'}dtdx\\
&\lesssim \int_{0}^{\infty}\|\mu_{K}(B_{r}^{(\alpha)}(\cdot,\cdot))\|_{L^{p'}(\mathbb{R}_{+}^{1+n})}\frac{dr}{r^{n+1}}\\
&=\int_{0}^{\delta}\|\mu_{K}(B_{r}^{(\alpha)}(\cdot,\cdot))\|_{L^{p'}(\mathbb{R}_{+}^{1+n})}\frac{dr}{r^{n+1}}\\
&\quad+
\int_{\delta}^{\infty}\|\mu_{K}(B_{r}^{(\alpha)}(\cdot,\cdot))\|_{L^{p'}(\mathbb{R}_{+}^{1+n})}\frac{dr}{r^{n+1}}\\
&\equiv I_{1}+I_{2}.
\end{align*}
Since
$$
\|\mu_{K}(B_{r}^{(\alpha)}(\cdot,\cdot))\|_{L^{p'}(\mathbb{R}_{+}^{1+n})}^{p'}\lesssim \mu(K)^{p'-1}\int_{\mathbb{R}_{+}^{1+n}}\mu_{K}(B_{r}^{(\alpha)}(t,x))dtdx\lesssim \mu(K)^{p'-1}r^{n+2\alpha},
$$
it follows that
$$I_{2}\lesssim \mu(K)\int_{\delta}^{\infty}\frac{dr}{r^{n+1-\frac{n+2\alpha}{p'}}}\lesssim \mu(K)\delta^{2\alpha-\frac{n+2\alpha}{p}}.$$
On the other hand,
$$
\mu(B_{r}^{(\alpha)}(t_{0},x_{0}))\lesssim r^{\frac{q}{p}(n+2\alpha-2\alpha p)}\ \forall\ r>0
$$
derives
\begin{align*}
\|\mu_{K}(B_{r}^{(\alpha)}(\cdot,\cdot))\|_{L^{p'}(\mathbb{R}_{+}^{1+n})}^{p'}&\lesssim
r^{\frac{q(n+2\alpha-2\alpha p)(p'-1)}{p}}\int_{\mathbb{R}_{+}^{1+n}}\mu_{K}(B_{r}^{(\alpha)}(t,x))dtdx\\
&\lesssim \mu(K)r^{\frac{q(n+2\alpha-2\alpha p)(p'-1)}{p}+n+2\alpha}.
\end{align*}
This clearly forces
$$
I_{1}\lesssim \mu({K})^{\frac{1}{p'}}\int_{0}^{\delta}r^{(p')^{-1}\big(\frac{q(n+2\alpha-2\alpha p)(p'-1)}{p}+n+2\alpha\big)}\frac{dr}{r^{n+1}}\lesssim
\mu(K)^{\frac{1}{p'}}\delta^{\frac{(q-p)(n+2\alpha-2\alpha p)}{p^{2}}}.
$$
Upon choosing $\delta=\mu{(K)}^{\frac{p}{q(n+2\alpha-2\alpha p)}}$, we obtain
$$
\|S_{\alpha}^{*}\mu_{K}\|_{L^{p'}(\mathbb{R}_{+}^{1+n})} \lesssim \mu(K)^{\frac{1}{q'}}\ \hbox{and\ hence}\  C_{p}^{(S_\alpha)}(K)^{\frac{1}{p'}}\lesssim \mu(K)^{\frac{1}{q'}}.
$$

(2) This follows from the above demonstration.

(3) Suppose \eqref{eS} is valid. Then
$$
\sup_{\lambda>0}\lambda(\mu(E_{\lambda}(g)))^{\frac{1}{q}}\lesssim \|g\|_{L^{p}(\mathbb{R}_{+}^{1+n})}\,\,\,\forall\,\,g \in L^{p}(\mathbb{R}_{+}^{1+n}).
$$
For each integer $i\in\mathbb Z$, there is a compact set $K_i\subset \mathbb{R}_{+}^{1+n}$ and a function $g_{i}\in L^{p}(\mathbb{R}_{+}^{1+n})$ such that
$$
C_{p}^{(S_\alpha)}(K_{i})\lesssim C_{S}(\mu; 2^{i}), \,\,\mu(K_{i})>2^{i};\,\,S_{\alpha} g_{i}\geq \textbf{1}_{K_{i}};\,\,\,\,\|g_{i}\|_{L^{p}(\mathbb{R}_{+}^{1+n})}^{p}\lesssim C_{p}^{(S_\alpha)}(K_{i}).
$$
Set
$$
g_{j,k}=\sup_{j\leq i\leq k}\left(\frac{2^{i}}{C_{S}(\mu; 2^{i})}\right)^{\frac{1}{p-q}}g_{i}
$$
for integers $j,k$ with $j<k$. Then
$$
\|g_{j,k}\|_{L^{p}(\mathbb{R}_{+}^{1+n})}^{p}\lesssim \sum_{i=j}^{k}\left(\frac{2^{i}}{C_{S}(\mu; 2^{i})}\right)^{\frac{p}{p-q}}\|g_{i}\|_{L^{p}(\mathbb{R}_{+}^{1+n})}^{p}\lesssim \sum_{i=j}^{k}\left(\frac{2^{i}}{C_{S}(\mu; 2^{i})}\right)^{\frac{p}{p-q}}C_{S}(\mu; 2^{i}).
$$
Since for $\forall \,(t,x)\in K_{i}$ and $j\leq i\leq k$ one has
$$
|S_{\alpha}g_{j,k}(t,x)|\geq \left(\frac{2^{i}}{C_{S}(\mu; 2^{i})}\right)^{\frac{1}{p-q}}S_{\alpha}g_{i}(t,x)\gtrsim \left(\frac{2^{i}}{C_{S}(\mu; 2^{i})}\right)^{\frac{1}{p-q}},
$$
it follows that
$$
2^{i}<\mu(K_{i})\leq \mu\left(E_{\left(\frac{2^{i}}{C_{S}(\mu; 2^{i})}\right)^{\frac{1}{p-q}}}(g_{j,k})\right),
$$
and so that
\begin{align*}
\|g_{j,k}\|_{L^{p}(\mathbb{R}_{+}^{1+n})}^{q}&\gtrsim \int_{\mathbb{R}_{+}^{1+n}}|S_{\alpha}g_{j,k}|^{q}d\mu\\
&\gtrsim \sum_{i=j}^{k}\left(\frac{2^{i}}{C_{S}(\mu; 2^{i})}\right)^{\frac{q}{p-q}}2^{i}\\
&\gtrsim \frac{\sum_{i=j}^{k}\left(\frac{2^{i}}{C_{S}(\mu; 2^{i})}\right)^{\frac{q}{p-q}}2^{i}\|g_{j,k}\|_{L^{p}(\mathbb{R}_{+}^{1+n})}^{q}}{\left(\sum_{i=j}^{k}\left(\frac{2^{i}}{C_{S}(\mu; 2^{i})}\right)^{\frac{q}{p-q}}C_{S}(\mu; 2^{i})\right)^{\frac{q}{p}}}\\
&\approx \left(\sum_{i=j}^{k}\frac{2^{\frac{ip}{p-q}}}{(C_{S}(\mu; 2^{i}))^{\frac{q}{p-q}}}\right)^{\frac{p-q}{p}}\|g_{j,k}\|_{L^{p}(\mathbb{R}_{+}^{1+n})}^{q}.
\end{align*}
This is the desired result thanks to
$$
\int_{0}^{\infty}\left(\frac{\lambda^{\frac{p}{q}}}{C_{S}(\mu; \lambda)}\right)^{\frac{q}{p-q}}\frac{d\lambda}{\lambda}\lesssim \sum_{i=-\infty}^{\infty}\frac{2^{\frac{ip}{p-q}}}{(C_{S}(\mu; 2^{i}))^{\frac{q}{p-q}}}\lesssim 1.
$$

Conversely, if
$$
\int_{0}^{\infty}\left(\frac{\lambda^{\frac{p}{q}}}{C_{S}(\mu; \lambda)}\right)^{\frac{q}{p-q}}\frac{d\lambda}{\lambda}<\infty,
$$
then setting
$$
T_{p,q}(\mu;g)=\sum_{i=-\infty}^{\infty}\frac{\left(\mu(E_{2^{i}}(g))-\mu(E_{2^{i+1}}(g))\right)^{\frac{p}{p-q}}}
{\left(C_{p}^{(S_\alpha)}(E_{2^{i}}(g))\right)^{\frac{q}{p-q}}}
$$
for each integer $i=0, \pm 1, \pm 2, \cdots,$ and $g\in C_{0}^{\infty}(\mathbb{R}_{+}^{1+n})$, we use an integration-by-part, the H\"{o}lder inequality and Lemma \ref{Lemma 3.2} to produce
\begin{align*}
\int_{\mathbb{R}_{+}^{1+n}}|S_{\alpha}g|^{q}d\mu&=-\int_{0}^{\infty}\lambda^{q}d\mu(E_{\lambda}(g))\\
&\lesssim \sum_{i=-\infty}^{\infty}\left(\mu(E_{2^{i}}(g))-\mu(E_{2^{i+1}}(g))\right)2^{iq}\\
&\lesssim (T_{p,q}(\mu;g))^{\frac{p-q}{p}}\left(\sum_{i=-\infty}^{\infty}2^{ip}C_{p}^{(S_\alpha)}\big(E_{2^{i}}(g)\big)\right)^{\frac{q}{p}}\\
&\lesssim (T_{p,q}(\mu;g))^{\frac{p-q}{p}}\left(\int_0^\infty C_{p}^{(S_\alpha)}(\{(t,x)\in \mathbb{R}_{+}^{1+n}:|S_{\alpha}g(t,x)|>\lambda\})d\lambda^{p}\right)^{\frac{q}{p}}\\
&\lesssim (T_{p,q}(\mu;g))^{\frac{p-q}{p}}\|g\|_{L^{p}(\mathbb{R}_{+}^{1+n})}^{q}\\
&\lesssim \|g\|_{L^{p}(\mathbb{R}_{+}^{1+n})}^{q}.
\end{align*}
In the last inequality we have used the following estimation:
\begin{align*}
(T_{p,q}(\mu;g))^{\frac{p-q}{p}}&\lesssim \left(\sum_{i=-\infty}^{\infty}\frac{\left(\mu(E_{2^{i}}(g))-\mu(E_{2^{i+1}}(g))\right)^{\frac{p}{p-q}}}
{\left(C_{S}(\mu; \mu(E_{2^{i}}(g))\right)^{\frac{q}{p-q}}}\right)^{\frac{p-q}{p}}\\
&\lesssim \left(\sum_{i=-\infty}^{\infty}\frac{(\mu(E_{2^{i}}(g)))^{\frac{p}{p-q}}-(\mu(E_{2^{i+1}}(g)))^{\frac{p}{p-q}}}
{\left(C_{S}(\mu; \mu(E_{2^{i}}(g))\right)^{\frac{q}{p-q}}}\right)^{\frac{p-q}{p}}\\
&\lesssim \left(\int_{0}^{\infty}\frac{ds^{\frac{p}{p-q}}}{(C_{S}(\mu;s))^{\frac{q}{p-q}}}\right)^{\frac{p-q}{p}}\\
&\approx \left(\int_{0}^{\infty}\left(\frac{\lambda^{\frac{q}{p}}}{C_{S}(\mu;s)}\right)^{\frac{q}{p-q}}\frac{d\lambda}{\lambda}\right)^{\frac{p-q}{p}}.
\end{align*}

Needless to say, the equivalence
$$
\eqref{eS}\Leftrightarrow P_{\alpha p}^{S}\mu \in L_\mu^{q(p-1)/(p-q)}(\mathbb{R}_{+}^{1+n})
$$
follows from Lemma \ref{l31}(b) and a modification (cf. \cite[Theorem 2.1]{COV}) of the argument for
$$
\eqref{eR}\Leftrightarrow P_{\alpha p}^{R}\mu \in L_\mu^{q(p-1)/(p-q)}(\mathbb{R}_{+}^{1+n}),
$$
and hence the interested reader can readily work out the details.
\end{proof}


\begin{thebibliography}{99}

\bibitem{Adams} D.R. Adams, Traces of potentials.II.,{\it Indiana Univ. Math. J.} 22(1973), 907--918.

\bibitem{AH}
D.R. Adams and L.I. Hedberg, {\it Function Spaces and Potential Theory}, A Series of Comprehensive Studies in Mathematics, Springer, Berlin, 1996.

\bibitem{ARAG} J.M. Angulo, M.D. Ruiz-Medina, V.V. Anh and W. Grecksch, {Fractional diffusion and fractional heat equation}, \emph{Adv. Appl. Prob.}, 32(2000), 1077--1099.

\bibitem{BC} R.M. Balan and D. Conus, A note on intermittency for the fractional heat equation, {\it Statist. Probab. Lett.}, 95(2014), 6--14.
\bibitem{BG}
R.M. Blumenthal and R.K. Getoor, Some theorems on stable processes, \emph{Trans. Amer. Math.
Soc.}, 95(1960), 263--273.

\bibitem{COV}
C. Cascante, J.M. Ortega and I.E. Verbitsky, Trace inequality of Sobolev type in the upper triangle cases, \emph{Proc. London Math. Soc.}, 80(2000), 391--414.

\bibitem{CX}
D.C. Chang and J. Xiao, $L^{q}$-extensions of $L^{p}$-spaces by fractional diffusion equations, \emph{Discrete Cont. Dyn. Systems}, 35(2015), 1905--1920.

\bibitem{CDDF}
J. Chen, Q. Deng, Y. Ding and D. Fan, Estimates on fractional power dissipatve equations
in function spaces, \emph{Nonlinear Anal.}, 75(2012), 2959--2974.


\bibitem{CS}
Z.Q. Chen and R. Song, Estimates on Green functions and Poisson kernels for symmetric
stable processes,\emph{ Math. Ann.}, 312(1998), 465--501.

\bibitem{CW}
P. Constantin and J. Wu, Behavior of solutions of 2D quasi-geostrophic equations, \emph{SIAM
J. Math. Anal.}, 148(1999), 937--948.


\bibitem{F} R. Fefferman, Strong differentiation with respect to measures, \emph{Amer. J. Math.},  103(1981), 33--40.

\bibitem{JXYZ}
R. Jiang, J. Xiao, D. Yang and Z. Zhai, Regularity and capacity for the fractional dissipative
operator, \emph{J. Differential Equ.}(2015), http://dx.doi.org/10.1016/j.jde.2015.04.033.

\bibitem{MYZ}
C. Miao, B. Yuan and B. Zhang, Well-posedness of the Cauchy problem for the fractional
power dissipative equations, \emph{Nonlinear Anal.}, 68(2008), 461--484.

\bibitem{NSS}
M. Nishio, K. Shimomura and N. Suzuki, $\alpha$-parabolic Bergman spaces, \emph{Osaka J. Math.},
42(2005), 133--162.

\bibitem{NSY}
M. Nishio, N. Suzuki and M. Yamada, Carleson inequalities on parabolic Bergman
spaces, \emph{Tohoku Math. J.}, 62(2010), 269--286.

\bibitem{NY}
M. Nishio and M. Yamada, Carleson type measures on parabolic Bergman spaces, \emph{J.
Math. Soc. Japan}, 58(2006), 83--96.

\bibitem{WY}
G. Wu and J. Yuan, Well-posedness of the Cauchy problem for the fractional power dissipative
equation in critical Besov spaces, \emph{J. Math. Anal. Appl.}, 340(2008), 1326--1335.

\bibitem{W1}
J. Wu, Lower bounds for an integral involving fractional Laplacians and the generalized
Navier-Stokes equations in Besov spaces, \emph{Comm. Math. Phys.}, 263(2005),
803--831.

\bibitem{W2}
J. Wu, Dissipative quasi-geostrophic equations with $L^p$ data, \emph{Electron. J. Diff. Equ.}, 2001(2001), 1-13.

\bibitem{XZ} L. Xie and X. Zhang, Heat kernel estimates for critical fractional diffusion operator,
arXiv:1210.7063v1.

\bibitem{Z1}
Z. Zhai, Strichartz type estimates for fractional heat equations, \emph{J. Math. Anal. Appl.}, 356(2009), 642--658.

\bibitem{Z2}
Z. Zhai, Carleson measure problems for parabolic Bergman spaces and homogeneous
Sobolev spaces, \emph{Nonlinear Anal.}, 73(2010), 2611--2630.

\end{thebibliography}
\end{document}